\DeclarePairedDelimiter{\ceil}{\lceil}{\rceil}
\DeclarePairedDelimiter{\floor}{\lfloor}{\rfloor}
\newtheorem{prop}{Proposition}
\newtheorem{cor}[prop]{Corollary}
\newtheorem{theorem}[prop]{Theorem}
\newtheorem{lemma}[prop]{Lemma}
\DeclareMathOperator{\CM}{CM}
\begin{document}
\title{On The Cookie Monster Problem}
\author{Leigh Marie Braswell\\Phillips Exeter Academy \and Tanya Khovanova\\MIT}
\maketitle

\begin{abstract}
The Cookie Monster Problem supposes that the Cookie Monster wants to empty a set $S$ of jars filled with various numbers of cookies. On each of his moves, he may choose any subset of jars and take the same number of cookies from each of those jars. The \textit{Cookie Monster number} of $S$, $\CM(S)$, is the minimum number of moves the Cookie Monster must use to empty all of the jars. This number depends on the initial distribution of cookies in the jars. We discuss bounds of the Cookie Monster number and explicitly find the Cookie Monster number for jars containing cookies in the Fibonacci, Tribonacci, $n$-nacci, and Super-$n$-Nacci sequences. We also construct sequences of $k$ jars such that their Cookie Monster numbers are asymptotically $rk$, where $r$ is any real number: $0 \leq r \leq 1$. 
\end{abstract}

\section{The Problem}

In 2002, the Cookie Monster appeared in \textit{The Inquisitive Problem Solver} \cite{VGL}. The hungry monster wants to empty a set of jars filled with various numbers of cookies. On each of his moves, he may choose any subset of jars and take the same number of cookies from each of those jars. The \textit{Cookie Monster number} is the minimum number of moves the Cookie Monster must use to empty all of the jars. This number depends on the initial distribution of cookies in the jars.

We consider the set of $k$ jars with $s_1 < s_2 < \ldots <s_k$ cookies as a set $S = \{s_{1}, s_{2}, \dots, s_{k} \}$. We call $s_{1}, s_{2}, \dots, s_{k}$ a $\emph{cookie sequence}$. We assume no two $s$ are identical, because jars with equal numbers of cookies may be treated like the same jar.  A jar that is emptied or reduced to the number of cookies in another jar is called a \textit{discarded} jar.

Suppose the \textit{Cookie Monster number} of $S$, which we denote $\CM(S)$, is $n$. On move $j$ for $j = 1, 2, \dots, n$ the Cookie Monster removes $a_{j}$ cookies from every jar that belongs to some subset of the jars. We call each $a_{j}$ a \textit{move amount} of the monster. The set of move amounts is represented by the set $A$. Each jar can be represented as the sum of a subset of $A$, or as a sum of move amounts.  This leads to the following observation: 

\begin{lemma}\label{thm:commutativity}
The Cookie Monster may perform his optimal moves in any order.
\end{lemma}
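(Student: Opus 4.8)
The plan is to reduce an $n$-move play of the Cookie Monster to pure bookkeeping: a solution is specified by, for each move $j = 1, \dots, n$, a positive move amount $a_j$ together with the subset $T_j \subseteq S$ of jars from which $a_j$ cookies are taken, and the only requirement for the play to empty everything is that each jar is accounted for exactly, i.e.\ $\sum_{j\,:\,i \in T_j} a_j = s_i$ for every jar $s_i$. Reordering the moves means executing the same pairs $(a_j, T_j)$ in a different sequence; since the collection of pairs is unchanged, the total number of cookies eventually removed from each jar is still $s_i$, so every jar ends empty.

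The one thing left to verify is that an arbitrary reordering is a \emph{legal} sequence of moves — the only way it could fail is if, at some point, a jar in the current subset holds fewer cookies than the move asks for. So fix a jar $s_i$ and any prefix of the reordered sequence. The number of cookies already taken from $s_i$ is $\sum_{j \in \mathrm{prefix},\, i \in T_j} a_j$, and because all $a_j > 0$ this partial sum is at most the full sum $\sum_{j\,:\,i \in T_j} a_j = s_i$; hence $s_i$ never goes negative. Moreover, when move $j$ with $i \in T_j$ is about to be performed, the term $a_j$ still lies among the not-yet-executed summands, so the jar currently holds at least $a_j$ cookies and the move is valid. Thus every permutation of an optimal move sequence is again a valid sequence emptying all jars in the same number of moves, which is the claim.

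There is no deep obstacle here; the statement is essentially the observation that the process is "commutative." The only point that needs care is the strict positivity of the move amounts: a move removing $0$ cookies does nothing and can be deleted, so an optimal solution has $a_j \ge 1$ for all $j$, and it is exactly this positivity that prevents the partial sums of the amounts hitting a given jar from overshooting its capacity under a reordering. (A side benefit of the lemma, which the rest of the paper will use freely, is that one may forget about time entirely and treat the problem as representing each $s_i$ as a subset-sum of the multiset $A = \{a_1, \dots, a_n\}$.)
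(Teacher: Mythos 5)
Your proof is correct and follows essentially the same route as the paper: both reduce an optimal play to the representation of each jar as a sum $s_i = \sum_{j : i \in T_j} a_j$ and invoke the commutativity of addition to conclude that any reordering still empties every jar. Your extra paragraph checking legality (that under any reordering a jar always contains at least $a_j$ cookies when move $j$ is executed, because the remaining summands include $a_j$ and all amounts are positive) is a point the paper's proof passes over silently, and it is a welcome bit of added care rather than a different method.
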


\begin{proof} 
Suppose $\CM(S)= n$ and the Cookie Monster follows an optimal procedure to empty all of the cookie jars in $S$ with $n$ moves. After the Cookie Monster performs move $n$, all of the jars are empty. Therefore, each jar $k \in S$ may be represented as $k = \sum\nolimits_{j \in I_{k}} a_{j}$ for some $I _{k} \subseteq \{1, 2, \dots, n \}$. By the commutative property of addition, the $a_j$ may be summed in any order for each jar $k$. This means that the Cookie Monster may perform his moves in any order he wishes and still empty the set of jars in $n$ moves.
\end{proof}

\section{General Algorithms}

\textit{The Inquisitive Problem Solver} \cite{VGL} presented general algorithms the Cookie Monster may use to empty the jars, none of which are optimal in all cases. These are:
\begin{itemize}
\item The \textit{Empty the Most Jars Algorithm} (EMJA) in which the Cookie Monster reduces the number of distinct jars by as many as possible on each of his moves. For example, if $S = \{7,4,3,1\} $, the monster would remove 4 cookies from the jars containing 7 and 4 cookies or remove 3 cookies from the jars containing 7 and 3 cookies. In both cases, he would reduce the number of distinct jars by 2.
\item The \textit{Take the Most Cookies Algorithm} (TCA) in which the Cookie Monster takes as many cookies as possible on each of his moves. For example, if $S = \{20,19,14,7,4\}$, the monster would remove 14 cookies from the jars containing 20, 19, and 14 cookies. In doing so, he would remove a total of 42 cookies, which is more than if he removed 19 cookies from the two largest jars, 7 cookies from the four largest jars, or 4 cookies from the five largest jars.
\item The \textit{Binary Algorithm} (BA) in which the Cookie Monster removes $2^m$ cookies from all jars that contain at least $2^m$ cookies for $m$ as large as possible on each of his moves. For example, if $S = \{18,16,9,8\} $, the monster would remove $2^4 = 16$ cookies from the jars containing 18 and 16 cookies. 
\end{itemize}

\section{Established Bounds}

There are natural lower and upper bounds for the Cookie Monster number (see O.~Bernardi and T.~Khovanova \cite{BK} and M.~Cavers \cite{C}).

\begin{theorem}\label{thm:Kh1}
Let $S$ be a set of $k$ jars. Then $$\lfloor \log_2k\rfloor + 1 \leq \text{$\CM(S)$} \leq k.$$ 
\end{theorem}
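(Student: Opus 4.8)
The plan is to prove the two bounds separately, as they require quite different ideas.

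\medskip

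\textbf{Upper bound } $\CM(S) \le k$. This direction is easy and I would dispatch it first. The Cookie Monster can always empty the jars one at a time: on move $j$, pick the single jar containing $s_j$ cookies (if it is still nonempty) and remove all $s_j$ cookies from it. Since there are $k$ jars, this uses at most $k$ moves, so $\CM(S) \le k$. (In fact one can do slightly better by emptying the smallest nonempty jar and simultaneously removing that amount from larger jars, but for the stated bound the trivial argument suffices.)

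\medskip

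\textbf{Lower bound } $\lfloor \log_2 k\rfloor + 1 \le \CM(S)$. Here I would use the structure set up in the preamble: if $\CM(S) = n$ with move amounts $A = \{a_1, \dots, a_n\}$, then by Lemma~\ref{thm:commutativity} each jar value $s_i$ equals $\sum_{j \in I_i} a_j$ for some subset $I_i \subseteq \{1, \dots, n\}$. The key observation is that the map $i \mapsto I_i$ must be injective: if two jars $s_i$ and $s_{i'}$ corresponded to the same subset $I_i = I_{i'}$, they would have the same number of cookies, contradicting the assumption that the $s_i$ are distinct. Therefore $k \le 2^n$, the total number of subsets of an $n$-element set. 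Taking logarithms gives $n \ge \log_2 k$, and since $n$ is an integer, $n \ge \lceil \log_2 k \rceil$. To sharpen $\lceil \log_2 k \rceil$ to $\lfloor \log_2 k \rfloor + 1$ (these differ precisely when $k$ is a power of $2$), I would note that the empty subset $I_i = \emptyset$ gives jar value $0$, which is excluded since all jars are nonempty (or, if one allows empty jars, they are already "discarded" and contribute nothing); hence the $k$ subsets all lie among the $2^n - 1$ nonempty subsets, giving $k \le 2^n - 1$, i.e. $n \ge \log_2(k+1) > \log_2 k$, which forces $n \ge \lfloor \log_2 k \rfloor + 1$.

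\medskip

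I expect the main obstacle to be precisely this last bookkeeping step: getting the floor-plus-one form rather than the weaker ceiling, and being careful about whether $0$ is an allowed jar value. The cleanest route is the "at most $2^n - 1$ distinct positive jar values" argument above; one then checks that $k \le 2^n - 1$ is equivalent to $n \ge \lfloor \log_2 k \rfloor + 1$ by considering the binary expansion of $k$. Everything else is routine, and no deep machinery is needed — the whole proof rests on the subset-sum representation of jar values already extracted in Lemma~\ref{thm:commutativity}.
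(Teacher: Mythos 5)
Your proof is correct, and the upper bound is exactly the paper's (empty one jar per move), but your lower bound takes a genuinely different route. You argue globally: each jar value is the sum of the move amounts over the nonempty set of moves touching that jar, distinct jar values force distinct index sets, and since no jar is empty the index sets are nonempty, so $k \leq 2^n - 1$, which yields $n \geq \lfloor \log_2 k \rfloor + 1$; this is a clean one-shot counting argument that leans directly on the representation behind Lemma~\ref{thm:commutativity}, and your care about excluding the empty subset (jar value $0$) is exactly what sharpens $\lceil \log_2 k \rceil$ to $\lfloor \log_2 k \rfloor + 1$. The paper instead argues dynamically: if $f(k)$ denotes the number of distinct nonempty jars after one move, then at most one jar is emptied and no value can be shared by three nonempty jars, so $k \leq 2f(k) + 1$; iterating, $k+1$ can at most halve with each move, giving $k + 1 \leq 2^n$ after $n$ moves --- the same inequality $k \leq 2^n - 1$ reached by tracking the process step by step. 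Your version is arguably more self-contained and avoids the induction over moves; the paper's version buys the extra structural insight that a single move can reduce the count of distinct nonempty jars by at most roughly half, which is the intuition underlying the Empty the Most Jars heuristic. Both arguments rely on the standing assumptions that the $s_i$ are distinct and positive, which you correctly flag.
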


\begin{proof} For the upper bound, the Cookie Monster may always empty the $i$-th jar on his $i$-th move and finish emptying the $k$ jars in $k$ steps. For the lower bound, remember that the $k$ jars contain distinct numbers of cookies, and let $f(k)$ be the number of distinct nonempty jars after the first move of the Cookie Monster. We claim that $k \leq 2f(k)+1$. Indeed, after the first move, there will be at least $k-1$ nonempty jars, but there cannot be three identical nonempty jars. This means that the number of jars plus one cannot decrease faster than twice each time.
\end{proof}

We can show that both bounds are reached for some sets of jars \cite{BK}. 

\begin{lemma}\label{thm:sums}
Let $S$ be a set of $k$ jars with $s_1 < s_2 < \ldots < s_k$ cookies. If $s_i > \Sigma_{k=1}^{i-1}s_k$ for any $i > 1$, then $\CM(S) = k$.
\end{lemma}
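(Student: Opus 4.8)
The plan is to show $\CM(S)\ge k$, since the reverse inequality $\CM(S)\le k$ is already given by Theorem~\ref{thm:Kh1}. Suppose $\CM(S)=n$ and let $A=\{a_1,\dots,a_n\}$ be the set of move amounts, all of which are positive. As noted in the discussion preceding Lemma~\ref{thm:commutativity}, after the Cookie Monster finishes, each jar is the sum of a subset of the move amounts; that is, $s_i=\sum_{j\in I_i}a_j$ for some nonempty $I_i\subseteq\{1,2,\dots,n\}$ (nonempty because $s_i>0$).

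The crucial step is the following claim: for every $i$ with $1\le i\le k$, there is a move index $j(i)\in I_i$ that lies in none of $I_1,\dots,I_{i-1}$. To prove it, set $U_i=\bigcup_{i'<i}I_{i'}$ and suppose for contradiction that $I_i\subseteq U_i$. Since every $a_j>0$ and each index in $U_i$ is counted at least once in the double sum over $i'<i$, we get $\sum_{j\in U_i}a_j\le\sum_{i'<i}\sum_{j\in I_{i'}}a_j=\sum_{i'<i}s_{i'}$. Now invoke the hypothesis $s_i>\sum_{i'<i}s_{i'}$ (with the empty sum read as $0$ when $i=1$): this yields $s_i=\sum_{j\in I_i}a_j\le\sum_{j\in U_i}a_j\le\sum_{i'<i}s_{i'}<s_i$, a contradiction. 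Hence $I_i\not\subseteq U_i$ and such a $j(i)$ exists.

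It remains to check that $j(1),\dots,j(k)$ are pairwise distinct. If $i<i'$ and $j(i)=j(i')$, then $j(i')=j(i)\in I_i\subseteq U_{i'}$, contradicting the defining property $j(i')\notin U_{i'}$. Therefore $\{j(1),\dots,j(k)\}$ consists of $k$ distinct elements of $\{1,\dots,n\}$, so $n\ge k$. Combined with Theorem~\ref{thm:Kh1} this gives $\CM(S)=k$.

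The only genuine obstacle is recognizing that the generic lower bound $\lfloor\log_2 k\rfloor+1$ from Theorem~\ref{thm:Kh1} is far too weak here, and that one instead needs to isolate a ``private'' move for each jar; once that claim is formulated, the superincreasing hypothesis slots in at exactly the right place. Minor points to handle carefully are the multiplicity bookkeeping when bounding $\sum_{j\in U_i}a_j$ by $\sum_{i'<i}s_{i'}$, the convention that $a_j>0$ for every move, and the $i=1$ edge case.
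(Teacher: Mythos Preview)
Your proof is correct. The paper argues inductively: since $s_k$ exceeds the sum of all smaller jars, some move must touch the largest jar alone; perform that move first (emptying jar $k$), then recurse on the remaining $k-1$ jars, which still satisfy the superincreasing hypothesis. You instead give a direct injection from jars to moves by showing that each $I_i$ contains an index absent from $I_1,\dots,I_{i-1}$. The underlying inequality is the same in both arguments---the superincreasing condition forces, for each $i$, some move amount that cannot be accounted for by the smaller jars---but your packaging avoids the ``reorder the moves and induct'' step entirely and yields $n\ge k$ in one shot. The paper's version is perhaps more intuitive operationally (it tells the monster what to do), while yours is cleaner combinatorially and makes the role of the representation $s_i=\sum_{j\in I_i}a_j$ more explicit.
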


\begin{proof}
As the largest jar has more cookies than all the other jars together, any strategy has to include a step in which the Cookie Monster takes cookies from the largest jar only. The Cookie Monster will not jeopardize the strategy if he takes all the cookies from the largest jar on the first move. Applying the induction process, we see that we need at least $n$ moves.
\end{proof}

The lexicographically first sequence that satisfies the lemma is the sequence of powers of 2. The following lemma \cite{C} shows that any sequence with growth smaller than powers of 2 can be emptied in fewer then $k$ moves.

\begin{lemma}\label{thm:Cavers1}
Let $S$ be a set of $k$ jars with $s_1 < s_2 < \ldots < s_k$ cookies. Then $$\CM(S) \leq \lfloor \log_2 s_k \rfloor + 1.$$
\end{lemma}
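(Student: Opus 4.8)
The plan is to use the Binary Algorithm (BA) and count its moves. Since every jar $s_i$ satisfies $s_i \leq s_k$, every jar can be written in binary using only the bit-positions $0, 1, \dots, \lfloor \log_2 s_k \rfloor$; that is, $\lfloor \log_2 s_k\rfloor + 1$ bits suffice to express all the $s_i$. The idea is to realize each such bit-position as a single move amount: on the move indexed by $m$, the Cookie Monster removes $2^m$ cookies from exactly those jars whose current count is at least $2^m$ when we process bits from the top down, or equivalently (invoking Lemma~\ref{thm:commutativity}, so order does not matter) from those jars whose binary representation has a $1$ in position $m$.

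Here are the steps in order. First I would fix $N = \lfloor \log_2 s_k \rfloor + 1$ and write each $s_i = \sum_{m=0}^{N-1} b_{i,m} 2^m$ with $b_{i,m} \in \{0,1\}$. Next I would define the move set $A = \{2^0, 2^1, \dots, 2^{N-1}\}$, so $|A| = N$, and on move $m$ (for $m = 0, \dots, N-1$) the Cookie Monster removes $2^m$ cookies from every jar $i$ with $b_{i,m} = 1$. Then I would check feasibility: I must verify that at the moment move $m$ is applied, each targeted jar really does contain at least $2^m$ cookies, so that the move is legal (cookie counts cannot go negative). The cleanest way is to process the moves in decreasing order of $m$: before move $m$, a jar $i$ targeted at step $m$ still holds $\sum_{\ell \leq m} b_{i,\ell} 2^\ell \geq 2^m$, which is legal; after all $N$ moves every jar has had exactly its binary digits subtracted and is empty. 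Finally, since the Cookie Monster has emptied all jars in $N = \lfloor \log_2 s_k \rfloor + 1$ moves, we conclude $\CM(S) \leq \lfloor \log_2 s_k \rfloor + 1$.

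The only real subtlety — and the step I would be most careful about — is the legality check: one must present the moves in an order (top bit first) for which no jar is ever asked to give up more cookies than it currently has. Once Lemma~\ref{thm:commutativity} is in hand this is automatic in spirit, but it is worth stating explicitly, since a naive bottom-up application of the same move amounts would not obviously keep the counts nonnegative at intermediate stages. Everything else is a routine consequence of binary representation.
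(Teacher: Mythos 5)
Your proposal is correct and follows essentially the same route as the paper: both use the Binary Algorithm with move amounts $\{2^0, 2^1, \ldots, 2^{\lfloor \log_2 s_k\rfloor}\}$ and the binary representation of each $s_i$ to empty all jars in $\lfloor \log_2 s_k\rfloor + 1$ moves. Your extra attention to the legality of intermediate states (processing the highest bit first) is a welcome bit of care that the paper leaves implicit, but it does not change the argument.
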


\begin{proof}
We use the binary algorithm. Choose $m$ such that $2^{m-1} \leq s_k \leq 2^m -1$. Let the set of move amounts $A = \{ 2^{m-1}, \ldots, 2^1, 2^0 \}$. Any integer less than $2^m$ can be expressed as a combination of entries in $A$ by using its binary representation. Because all elements in $S$ are less than $2^m$, all $s_i$ may be written as sums of subsets of $A$. In other words, each jar in $S$ may be emptied by a sum of move amounts in $A$. Therefore, $\text{$\CM(S)$} \leq |A|= m$.  However, $2^{m-1} \leq s_k$ implies that $m \leq 1 + \log_2 s_k$. Therefore, $\text{$\CM(S)$} \leq m \leq 1 + \lfloor \log_2 s_k \rfloor$.
\end{proof}

It follows that the lower bound in Theorem ~\ref{thm:Kh1} is achieved for sets of $k$ jars with the largest jar less than $2^m$. The lexicographically smallest such sequence is the sequence of natural numbers: $S ={1,2,3,4,\ldots,k}$. 

We note that if we multiply the amount of cookies in every jar by the same number, the Cookie Monster number of that set of jars does not change. It follows that the lower bound can also be reached for sequences where all $s_i$ are divisible by $d$ and $s_k < d\cdot 2^m$.

Cavers \cite{C} presented a corollary of Lemma~\ref{thm:Cavers1} that bounds $\CM(S)$ by the diameter of the set $S$.

\begin{cor}\label{thm:Cavers2}
Let $S$ be a set of $k$ jars with $s_1 < s_2 < \ldots < s_k$ cookies. Then $$\CM(S) \leq 2+ \lfloor \log_2 (s_k - s_1) \rfloor .$$
\end{cor}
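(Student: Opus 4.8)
The plan is to reduce to Lemma~\ref{thm:Cavers1} by spending a single move to shrink the effective diameter of the problem down to $s_k - s_1$. First I would have the Cookie Monster remove $s_1$ cookies from \emph{every} jar on his first move; this is a legal move, since the chosen subset is all of the jars. Afterwards the smallest jar is empty (hence discarded), and what remains is a set $S'$ of $k-1$ jars containing $s_2 - s_1 < s_3 - s_1 < \cdots < s_k - s_1$ cookies, whose largest element is $s_k - s_1$.

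Next I would invoke Lemma~\ref{thm:Cavers1} applied to $S'$, which is a bona fide set of $k-1$ jars with distinct positive entries (distinctness is inherited from $S$). That lemma gives $\CM(S') \le \lfloor \log_2 (s_k - s_1) \rfloor + 1$. Since the Cookie Monster can empty $S$ by performing the single subtraction move and then following an optimal procedure for $S'$, we conclude $\CM(S) \le 1 + \CM(S') \le 2 + \lfloor \log_2 (s_k - s_1) \rfloor$, as desired. Note that Lemma~\ref{thm:commutativity} is not needed here, as the moves are carried out in the natural order.

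I do not expect a genuine obstacle; the work is entirely bookkeeping. The points to verify are that the first move is legal, that $S'$ really does consist of $k-1$ jars with distinct positive cookie counts so that Lemma~\ref{thm:Cavers1} applies, and the degenerate case $k = 1$, in which $s_k - s_1 = 0$ makes the logarithm undefined; there $\CM(S) = 1$ trivially, and the statement is understood to assume $k \ge 2$ (implicit in writing $s_1$ and $s_k$ as distinct values). If a fully uniform statement were wanted, one could instead phrase the bound as $\CM(S) \le \max\{1,\, 2 + \lfloor \log_2(s_k - s_1)\rfloor\}$.
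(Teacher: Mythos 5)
Your proposal is correct and follows exactly the paper's argument: the paper's one-line proof is to remove $s_1$ cookies from every jar on the first move and then (implicitly) apply Lemma~\ref{thm:Cavers1} to the remaining jars, whose largest entry is $s_k - s_1$. Your additional bookkeeping about distinctness, legality of the move, and the degenerate case $k=1$ is sound but not a different route.
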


\begin{proof}
We remove $s_1$ cookies from every jar on the first move.
\end{proof}

It follows that if $S$ forms an arithmetic progression, then $\text{$\CM(S)$} \leq 2 + \lfloor \log_2 (k-1) \rfloor $.

\section{Naccis}\label{sec:nacci}
We now present our Cookie Monster with interesting sequences of cookies in his jars. First, we challenge our monster to empty a set of jars containing cookies in the Fibonacci sequence. The Fibonacci sequence is defined as $F_{0} = 0$, $F_{1} = 1$, and $F_{i} = F_{i-2} + F_{i-1}$ for $i \geq 2$. A jar with 0 cookies and one of the two jars containing 1 cookie are irrelevant, so our smallest jar will contain $F_2$ cookies. Belzner \cite{B} found that the set $S$ of $k$ jars containing $\{ F_2, F_3, \ldots, F_{k+1}\}$ cookies has $\CM(S) = \lfloor \frac{k}{2} \rfloor + 1$. We now present our proof using the well-known inequality \cite{W} for the Fibonacci numbers presented in Lemma~\ref{thm:Fib}.

\begin{lemma}\label{thm:Fib}
For the set of Fibonacci numbers $\{ F_1, F_2, \ldots, F_{k+1}\}$, $$F_{k+1} -1 = \sum\limits_{i=1}^{k-1}F_i.$$
\end{lemma}

\begin{theorem}
For $k$ jars with the set of cookies $S = \{F_{2}, \dots, F_{k+1}\}$, the Cookie Monster number is $\CM(S) = \lfloor \frac{k}{2} \rfloor + 1$.
\end{theorem}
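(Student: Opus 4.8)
The plan is to prove the two bounds separately. For the upper bound $\CM(S)\le\lfloor k/2\rfloor+1$, I would exhibit an explicit set of move amounts $A$ of size $\lfloor k/2\rfloor+1$ such that every $F_i$ with $2\le i\le k+1$ is a sum of a subset of $A$. The natural candidate, suggested by Zeckendorf-type identities and by Lemma~\ref{thm:Fib}, is to take every other Fibonacci number together with a small correction term. Concretely, I expect $A=\{F_2,F_4,F_6,\dots\}$ (the even-indexed Fibonacci numbers up to roughly $F_{k+1}$) to work for the odd-indexed jars directly, since $F_{2m+1}=F_2+F_4+\cdots+F_{2m}$ is another standard identity; the even-indexed jars $F_{2m}$ would then be handled because $F_{2m}=F_{2m-1}+F_{2m-2}=(F_2+\cdots+F_{2m-2})+F_{2m-2}$, i.e. they are sums of even-indexed Fibonaccis once we also allow repeats — but repeats are not allowed, so instead I would include one extra move amount equal to $1=F_2$'s "twin," or rather reorganize $A$ as the even-indexed Fibonacci numbers plus $F_1=1$. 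I would then check by the identity $F_{2m}-1=\sum_{i=1}^{2m-2}F_i$ (Lemma~\ref{thm:Fib} with $k=2m-1$) combined with the fact that $\sum_{i \text{ odd}, i\le 2m-2}F_i$ telescopes, that every jar is covered. Counting: the even-indexed Fibonacci numbers among $F_2,\dots,F_{k+1}$ number about $\lfloor k/2\rfloor$, plus the single extra amount, giving $\lfloor k/2\rfloor+1$. I would carry out the parity bookkeeping carefully for $k$ even versus $k$ odd.

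For the lower bound $\CM(S)\ge\lfloor k/2\rfloor+1$, I would argue about how fast the number of jars can drop. The key structural fact is that any single move removes a fixed amount $a$ from some subset of jars, and after the move no three jars can share a value (indeed at most two jars can coincide, from one pair $s_i, s_j$ with $s_j-s_i=a$). More usefully, I would track a potential function: since the Fibonacci sequence grows geometrically with ratio $\varphi<2$, the "overlap structure" of subset sums is tightly constrained. I expect the cleanest route is to show that a single move amount $a$ can "explain the top-end gaps" of only boundedly many jars — precisely, because $F_{i+1}=F_i+F_{i-1}$ and $F_{i-1}<F_i$, any fixed difference $a$ equals $F_{i+1}-F_i$ for at most one index $i$, so the greedy merging in EMJA-style reasoning saves at most one jar per move beyond a baseline. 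Sharpening this: after $t$ moves the jars are covered by subset sums of a $t$-element set $A$, so there are at most $2^t$ distinct values, but the Fibonacci constraint is much stronger. I would instead use the identity-driven argument: if $|A|=t$, then $F_{k+1}\le\sum_{a\in A}a$, and since the $s_i$ are "spread out like Fibonacci numbers" one shows $k\le 2t-1$ via an inductive peeling argument identical in spirit to Lemma~\ref{thm:sums} — each move can zero out or merge at most two of the extreme jars because consecutive Fibonacci numbers satisfy $F_{i+1}<2F_i$ but $F_{i+1}>F_i$, so removing any amount collapses at most a pair.

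The main obstacle I anticipate is the lower bound, specifically making the "at most two jars collapse per move" heuristic into a rigorous decrement argument that yields exactly $\lfloor k/2\rfloor+1$ rather than something off by a constant. The subtlety is that over the course of the algorithm, moves interact: a move that merges a pair now may be enabled only because an earlier move created that pair. I would handle this by invoking Lemma~\ref{thm:commutativity} to fix a convenient order — process moves from largest amount to smallest — and then run an induction on $k$: the largest move amount $a_{\max}$ must be applied to a set of jars whose pairwise differences it "resolves," and by the Fibonacci gap inequality $F_{i-1}<F_i$ the top jar $F_{k+1}$ together with $F_k$ can absorb at most the two of them into the recursion, leaving a Fibonacci-like instance on $k-2$ jars with Cookie Monster number $\ge\lfloor (k-2)/2\rfloor+1$ by the inductive hypothesis; adding back the one move gives $\lfloor k/2\rfloor+1$. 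Verifying the base cases $k=1,2,3$ and confirming the recursion's parity is routine. An alternative, possibly cleaner, lower-bound route worth trying is to apply Lemma~\ref{thm:sums} after a change of variables or to a cleverly chosen subset of the jars, since sufficiently sparse sub-Fibonacci subsequences (e.g. every third term) grow faster than the sum of their predecessors.
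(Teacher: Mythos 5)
Your subset-sum framing of the upper bound is legitimate (if every jar value is a sum of a sub(multi)set of a set $A$ of move amounts, then $|A|$ moves suffice), but the concrete set you propose does not deliver the bound. The identity you quote is off by one: $F_2+F_4+\cdots+F_{2m}=F_{2m+1}-1$, and after patching with an extra $1$ your set $A=\{1,F_2,F_4,\ldots\}$ has the right size only when $k$ is even. When $k$ is odd the largest jar $F_{k+1}$ is even-indexed, so it must itself be put into $A$ in addition to the even-indexed amounts up to $F_{k-1}$ needed to represent $F_k$, and you end up with $\lfloor k/2\rfloor+2$ amounts. The clean fix is to use odd indices: $A=\{F_1,F_3,F_5,\ldots\}$ up to the largest odd index at most $k+1$ has exactly $\lfloor k/2\rfloor+1$ elements, contains every odd-indexed jar, and covers every even-indexed jar via $F_1+F_3+\cdots+F_{2j-1}=F_{2j}$. (The paper describes the same thing dynamically: take $F_k$ from the two largest jars, emptying the $F_k$ jar and merging the $F_{k+1}$ jar into the $F_{k-1}$ jar, so two jars are discarded per move.)

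The genuine gap is in your lower bound. Its central principle, that a single move can collapse at most a pair of jars, is false: removing $3$ from the jars with $4,5,6$ cookies in $\{1,2,3,4,5,6\}$ discards three jars at once, and the honest version of "no three equal jars after a move" only yields the logarithmic bound of Theorem~\ref{thm:Kh1}. The fact that does the work is Lemma~\ref{thm:Fib}: $F_{k+1}$ exceeds $F_2+\cdots+F_{k-1}$, the total contents of the smallest $k-2$ jars, so in any strategy some move must touch the largest jar and none of the smallest $k-2$ jars --- otherwise every move touching the largest jar would also hit a small jar, and summing amounts would force the small jars to surrender at least $F_{k+1}$ cookies, more than they contain. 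That dedicated move, combined with the observation that the remaining moves restricted to the smallest $k-2$ jars must empty $\{F_2,\ldots,F_{k-1}\}$ (Cookie Monster numbers are monotone under taking subsets), gives $\CM(S)\ge \CM(\{F_2,\ldots,F_{k-1}\})+1$, and induction closes the count; your sketch gestures at peeling off the top two jars but never isolates this inequality or the restriction step, so the decrement argument is not actually established. Ironically, your throwaway alternative is the cleanest correct route: apply Lemma~\ref{thm:sums} not to every third jar but to every other jar --- for $k$ odd the even-indexed jars $\{F_2,F_4,\ldots,F_{k+1}\}$, and for $k$ even the set $\{F_2,F_3,F_5,\ldots,F_{k+1}\}$. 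Each of these is super-increasing (for instance $F_2+F_4+\cdots+F_{2j-2}=F_{2j-1}-1<F_{2j}$), has exactly $\lfloor k/2\rfloor+1$ elements, and by subset-monotonicity forces $\CM(S)\ge\lfloor k/2\rfloor+1$ with no induction at all.
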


\begin{proof} 
By Lemma~\ref{thm:Fib}, one of the Cookie Monster's moves when emptying the set $S = \{F_{2}, \dots, F_{k+1}\}$ must touch the $k$-th jar containing $F_{k+1}$ cookies and not the first $k-2$ jars containing the set of $\{F_{2}, \dots, F_{k-1}\}$ cookies. The Cookie Monster cannot be less optimal if he starts with this move. Suppose that on his first move, the Cookie Monster takes $F_{k}$ cookies from the $(k-1)$-th and $k$-th jars. This empties the $(k-1)$-th jar and reduces the $k$-th jar to $F_{k+1} - F_k = F_{k-1}$ cookies, which is the same number of cookies that the $(k-2)$-th jar has. Thus, two jars are discarded in one move. He cannot do better than that. By induction, the Cookie Monster may optimally continue in this way. If $k$ is odd, the Cookie Monster needs an extra move to empty the last jar, and the total number of moves is $(k+1)/2$. If $k$ is even, the Cookie Monster needs two extra moves to empty the last two jars, and the total number of moves is $k/2+1$. Therefore, for any $k$, $$\CM(S) = \ceil[\Big] {\frac{k+1}{2}} = \floor[\Big] {\frac{k}{2}} + 1.$$ 
\end{proof}

There exist lesser-known and perhaps more challenging sequences of numbers similar to Fibonacci called $n$-nacci \cite{W2}. We define the $n$-nacci sequence as $N_i = 0$ for $0 \leq i < n - 1$, $N_{i} = 1$ for $n-1 \leq i \leq n$, and $N_{i} = N_{i-n} + N_{i-n+1} + \cdots + N_{i - 1}$ for $i \geq n$. For example, in the 3-nacci sequence, otherwise known as Tribonacci, each term after the third is the sum of the previous three terms. The Tetranacci sequence is made by summing the previous four terms, the Pentanacci by the previous five, and the construction of higher $n$-naccis follows.

We would like to start with the Tribonacci sequence, denoted $T_i$: 0, 0, 1, 1, 2, 4, 7, 13, 24, 44, 81, $\ldots .$ The main property of the Tribonacci sequence, like the Fibonacci sequence, is that the next term is the sum of previous terms. We can use this fact to make a similar strategy for emptying jars with Tribonacci numbers. We will prove that the set $S$ of $k$ jars containing $S = \{T_{3}, \dots, T_{k+2}\}$ cookies has $\CM(S) = \lfloor \frac{2k}{3} \rfloor + 1$ by using inequalities relating the Tribonacci numbers proved in Lemma~\ref{thm:Tri}.

\begin{lemma}\label{thm:Tri}
The Tribonacci sequence satisfies the inequalities:
\begin{itemize}
\item $T_{k+1} > \sum\limits_{i=1}^{k-1}T_i$
\item $T_{k+2} - T_{k+1} > \sum\limits_{i=1}^{k-1}T_i$.
\end{itemize}
\end{lemma}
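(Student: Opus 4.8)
The plan is to establish both inequalities by a single induction on $k$, using the defining recurrence $T_{k+2} = T_{k+1} + T_k + T_{k-1}$ and the convention $T_1 = T_2 = 1$, $T_3 = 1$ (or rather starting from the small explicit values $0,0,1,1,2,4,7,13,\dots$). For the base cases I would simply check small $k$ directly: e.g.\ for $k=3$ the first inequality reads $T_4 = 2 > T_1 + T_2 = 2$, which is \emph{false}, so the real content is that the inequalities hold once $k$ is large enough (presumably $k \ge 4$ or $k \ge 5$), and I would state the lemma with that proviso or reindex so that it is literally true as written. Getting the indexing/base case right is the first thing to nail down, since the Tribonacci sequence has two leading zeros and the "$\sum_{i=1}^{k-1} T_i$" notation is ambiguous about whether those zeros are included.

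For the \textbf{first inequality}, $T_{k+1} > \sum_{i=1}^{k-1} T_i$: I would prove by induction the slightly stronger two-sided statement $\sum_{i=1}^{k-1} T_i < T_{k+1} < \sum_{i=1}^{k} T_i$ (a Tribonacci analogue of Lemma~\ref{thm:Fib}), or else directly exploit the recurrence. Writing $\Sigma_{k} := \sum_{i=1}^{k} T_i$, the inductive step amounts to: $T_{k+2} = T_{k+1} + T_k + T_{k-1}$, and by the inductive hypothesis $T_{k+1} > \Sigma_{k-1}$, so $T_{k+2} > \Sigma_{k-1} + T_k + T_{k-1} = \Sigma_{k} + T_{k-1} > \Sigma_{k}$, which is exactly the claim at $k+1$. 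This is clean once the base case is verified.

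For the \textbf{second inequality}, $T_{k+2} - T_{k+1} > \sum_{i=1}^{k-1} T_i$: here I would use the recurrence to rewrite the left side as $T_{k+2} - T_{k+1} = T_k + T_{k-1}$. So the inequality to prove becomes $T_k + T_{k-1} > \sum_{i=1}^{k-1} T_i = \Sigma_{k-1}$, i.e.\ $T_k > \Sigma_{k-1} - T_{k-1} = \Sigma_{k-2}$. But $\Sigma_{k-2} = \sum_{i=1}^{k-2} T_i$, and $T_k > \sum_{i=1}^{k-2} T_i$ is precisely the first inequality with $k$ replaced by $k-1$. Hence the second inequality follows immediately from the first applied one index lower — no separate induction is needed. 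This is the observation that makes the whole lemma short.

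The main obstacle I anticipate is purely bookkeeping: pinning down exactly which values of $k$ make each inequality valid (the small-$k$ cases fail), and making sure the summation index convention is consistent between the two bullets and with Lemma~\ref{thm:Fib}; I would resolve this by fixing the explicit small terms $T_1=1, T_2=1, T_3=1, T_4=2, T_5=4, \dots$ at the outset and stating the lemma for $k$ large enough (with the threshold determined by the base-case check), since that is all the emptying-strategy theorem actually needs.
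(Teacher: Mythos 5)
Your core argument is correct and is, in substance, the paper's own: the paper proves both bullets by a simultaneous induction in which the second inequality at $k+1$ is obtained from the first at $k$ via the identity $T_{k+3}-T_{k+2}=T_{k+1}+T_k$ --- exactly your observation that the second bullet is the first bullet shifted down one index --- while you decouple the two, proving the first inequality by its own induction (writing $\Sigma_j$ for $\sum_{i=1}^{j}T_i$: $T_{k+2}=T_{k+1}+T_k+T_{k-1}>\Sigma_{k-1}+T_k+T_{k-1}\ge \Sigma_k$) and then deducing the second from the first at index $k-1$. That is a mild streamlining of the same proof, not a different route.

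The one genuine flaw in your write-up is the claim that the lemma is false for small $k$ and needs a ``$k$ large enough'' proviso. Under the paper's indexing (the $n$-nacci definition gives $T_0=T_1=0$, $T_2=T_3=1$, $T_4=2$, $T_5=4,\dots$), the sum $\sum_{i=1}^{k-1}T_i$ includes the zero term $T_1$, so your test case $k=3$ reads $T_4=2>T_1+T_2=0+1=1$, which is true, not false; the spurious counterexample comes from your initial values $T_1=T_2=T_3=1$, $T_4=2$, which are not even consistent with the Tribonacci recurrence ($1+1+1\neq 2$). With the paper's convention both inequalities hold for every $k\ge 2$, so no restatement or reindexing of the lemma is needed. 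Your instinct to scrutinize the base case is still sound in one respect: at $k=1$ the second inequality degenerates to $0>0$, so the induction should properly be anchored at $k=2$ (the paper's claim that both inequalities hold at $k=1$ is slightly off), but this affects nothing in the application, where the jars start at $T_3$.
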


\begin{proof}
We proceed by induction. For the base case $k = 1$, both inequalities are true. Suppose they are both true for some $k$. If $T_{k}$ is added to both sides of both inequalities' induction hypotheses, we obtain
\begin{eqnarray*}
T_{k+1} + T_k > \sum\limits_{i=1}^{k}T_i
\end{eqnarray*}
\begin{eqnarray*}
T_{k+2} - T_{k+1} + T_k > \sum\limits_{i=1}^{k}T_i.
\end{eqnarray*}
But $T_{k+1} + T_k = T_{k+3} - T_{k+2}$ and $T_{k+2} - T_{k+1} + T_k < T_{k+2}$. Therefore,
\begin{eqnarray*}
T_{k+3} - T_{k+2} = T_{k+1} + T_k > \sum\limits_{i=1}^{k}T_i
\end{eqnarray*}
\begin{eqnarray*}
T_{k+2} > T_{k+2} - T_{k+1} + T_k > \sum\limits_{i=1}^{k}T_i
\end{eqnarray*}
and both inequalities are true for $k+1$. 
\end{proof}

\begin{theorem}
When $k$ jars contain a set of Tribonacci numbers 

\noindent $S = \{T_{3}, \dots, T_{k+2}\}$, then $\CM(S) = \lfloor \frac{2k}{3} \rfloor + 1$.
\end{theorem}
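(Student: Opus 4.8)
The plan is to establish the two matching bounds $\CM(S)\le\lfloor 2k/3\rfloor+1$ and $\CM(S)\ge\lfloor 2k/3\rfloor+1$ simultaneously, by induction on $k$ through the recursion $\CM(\{T_3,\dots,T_{k+2}\})=\CM(\{T_3,\dots,T_{k-1}\})+2$ for $k\ge 4$. Since $\lfloor 2k/3\rfloor+1=\bigl(\lfloor 2(k-3)/3\rfloor+1\bigr)+2$, this recursion together with the base cases $k=1,2,3$ finishes the proof. The base cases are the sets $\{1\}$, $\{1,2\}$, $\{1,2,4\}$, with Cookie Monster numbers $1$, $2$, $3$: the values $1$ and $2$ come from Theorem~\ref{thm:Kh1}, and the value $3$ from Lemma~\ref{thm:sums} because $\{1,2,4\}$ are powers of $2$; in all three cases this equals $\lfloor 2k/3\rfloor+1$. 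Throughout I will use the description of $\CM$ implicit in Lemma~\ref{thm:commutativity}: $\CM(S)$ is the least $n$ for which there exist positive move amounts $a_1,\dots,a_n$ such that every $s\in S$ is a subset sum of $\{a_1,\dots,a_n\}$.

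For the upper bound I exhibit two moves that discard the three largest jars. Invoking $T_{k+2}=T_{k+1}+T_k+T_{k-1}$: first remove $T_{k+1}$ cookies from the two largest jars, which empties the jar holding $T_{k+1}$ and leaves the largest jar holding $T_{k+2}-T_{k+1}=T_k+T_{k-1}$; then remove $T_k$ cookies from the jar holding $T_k$ and from the (modified) largest jar, which empties the former and reduces the latter to $T_{k-1}$, coinciding with the jar already holding $T_{k-1}$. Every intermediate quantity is positive, so this is legal, and the surviving jars form exactly the set $\{T_3,\dots,T_{k-1}\}$. By the inductive hypothesis these are emptied in $\lfloor 2(k-3)/3\rfloor+1$ more moves, giving $\CM(S)\le\lfloor 2k/3\rfloor+1$.

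For the lower bound, fix an optimal set $A$ of move amounts, so that each Tribonacci jar is a subset sum of $A$ and $|A|=\CM(S)$. Let $B\subseteq A$ consist of those move amounts occurring in the subset-sum representation of at least one of the $k-3$ smallest jars $T_3,\dots,T_{k-1}$, and set $\sigma=\sum_{i=3}^{k-1}T_i$. Since each $a\in B$ occurs in the representation of at least one of these jars, $\sum_{a\in B}a\le\sum_{i=3}^{k-1}T_i=\sigma$. I claim $|A\setminus B|\ge 2$. If instead $A=B$, then $T_{k+1}\le\sum_{a\in B}a\le\sigma$, contradicting $T_{k+1}>\sum_{i=1}^{k-1}T_i\ge\sigma$ from Lemma~\ref{thm:Tri}. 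Otherwise $A\setminus B=\{a^{*}\}$ for a single amount $a^{*}$; if $a^{*}$ is missing from the representation of $T_{k+2}$ or from that of $T_{k+1}$, then that jar has value at most $\sum_{a\in B}a\le\sigma$, again contradicting Lemma~\ref{thm:Tri}; and if $a^{*}$ appears in both representations then $a^{*}\le T_{k+1}$ while $T_{k+2}\le a^{*}+\sum_{a\in B}a\le a^{*}+\sigma$, so $T_{k+2}-T_{k+1}\le\sigma$, contradicting the second inequality of Lemma~\ref{thm:Tri}. Hence $|B|\le|A|-2$, and since the moves in $B$ already empty $\{T_3,\dots,T_{k-1}\}$ we get $\CM(\{T_3,\dots,T_{k-1}\})\le|B|\le\CM(S)-2$, so by induction $\CM(S)\ge\lfloor 2(k-3)/3\rfloor+1+2=\lfloor 2k/3\rfloor+1$.

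The routine parts are the base cases, the floor-identity bookkeeping, the positivity checks in the two explicit moves, and the double-counting inequality $\sum_{a\in B}a\le\sigma$. The crux --- and the step I expect to demand the most care --- is the dichotomy $|A\setminus B|\ge 2$ in the lower bound: this is precisely where both inequalities of Lemma~\ref{thm:Tri} enter, the first forcing the two largest jars to overshoot the budget $\sigma$ available to the small jars, and the second excluding the degenerate scenario in which one oversized move amount simultaneously serves both of them. One should also check that an optimal $A$ may be chosen without repeated amounts, or simply carry the argument out for multisets, which the double-counting step accommodates since a maximum of nonnegative numbers is at most their sum.
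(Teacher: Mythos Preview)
Your proof is correct and follows essentially the same approach as the paper: the upper bound is the identical two-move construction discarding the three largest jars, and the lower bound isolates at least two move amounts that cannot participate in emptying the smallest $k-3$ jars, invoking exactly the two inequalities of Lemma~\ref{thm:Tri}. Your formulation of the lower bound---defining $B$ as the move amounts used by the small jars and arguing $|A\setminus B|\ge 2$ via the budget inequality $\sum_{a\in B}a\le\sigma$ and a three-way case split---is a crisper and more rigorous version of the paper's somewhat informal reasoning that ``at least two moves touch only the three largest jars,'' but the underlying idea is the same.
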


\begin{proof}
Consider the largest three jars. The largest jar and the second largest jar each have more cookies than the remaining $k-3$ jars do in total. That means the Cookie Monster must perform a move that includes the second largest, and possibly the largest jars, and does not touch the smallest $k-3$ jars. Because of the second inequality in Lemma~\ref{thm:Tri}, the largest jar needs to be reduced by one more move that does not touch the smallest $k-3$ jars to be discarded. Thus, because at least two moves are needed to touch and discard the three largest jars, discarding all three jars in two moves is optimal.

This can be achieved if the Cookie Monster takes $T_{k+1}$ cookies from the two largest jars on his first move, emptying the second largest jar and reducing the number of cookies in the largest jar to $T_{k+2} - T_{k+1}$. He then should take $T_{k}$ cookies from the third largest and the largest jars on his second move, emptying the third largest jar and reducing the largest jar to $T_{k+2} - T_{k+1} - T_k = T_{k-1}$ cookies. Now the largest jar is discarded, as the number of cookies left there is the same as the number of cookies in another jar. Because this strategy empties three jars in two moves, the Cookie Monster may optimally continue in this way.

If $k$ has remainder 1 modulo 3, he needs one more move for the last jar bringing the total to $2\lfloor k/3 \rfloor + 1$. If $k$ has remainder 2 modulo 3, he needs two more moves, and the total is $2\lfloor k/3 \rfloor + 2$. If $k$ has remainder 0 modulo 3, he needs three moves for the last group of three bringing the total to $2\lfloor k/3 \rfloor + 1$. Therefore, for any $k$,
$$\text{$\CM(S)$} = \floor[\Big] {\frac{2k}{3} } + 1.$$ 
\end{proof}

More generally, here is the Cookie Monster's strategy for dealing with $n$-nacci sequences, which we call \textit{cookie-monster-knows-addition}. He takes $n - 1$ moves to empty $n$ largest jars. On the $i$-th move he takes $N_{k-i}$ cookies from the $(k-i)$-th largest jar and the largest jar. This way he empties the $(k-i)$-th largest jar and reduces the $k$-th largest jar for each $i$ such that $0 < i < n $. In doing this, $n$ jars are emptied in $n - 1$ moves. This process can be repeated, until at most $n$ elements remain, which he empties one by one.  Thus, when $S = \{N_{n}, \dots, N_{n+k-1}\}$, we will prove that the Cookie Monster number $$\CM(S) = \floor[\Big] {\frac{(n-1)k}{n}} + 1.$$ We first prove the necessary inequalities relating the $n$-nacci numbers. 

\begin{lemma}\label{thm:nac1}
The $n$-nacci sequence satisfies the inequality:
$$N_{k+1} > \sum\limits_{i=1}^{k-1}N_i.$$
\end{lemma}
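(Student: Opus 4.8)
The plan is to prove the inequality by induction on $k$, in the same spirit as the proof of Lemma~\ref{thm:Tri}. The one structural fact I need is this: as soon as $k+2 \geq n$, the defining recurrence reads $N_{k+2} = N_{k+2-n} + N_{k+3-n} + \cdots + N_{k+1}$, and since every $n$-nacci number is nonnegative and (because $n \geq 2$) both $N_{k+1}$ and $N_k$ occur among these $n$ summands, dropping the remaining nonnegative terms yields the crude bound $N_{k+2} \geq N_{k+1} + N_k$.

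For the base case I would take $k$ to be the smallest index for which the left-hand side is already positive, namely $k = n-2$: then $N_{k+1} = N_{n-1} = 1$, while the right-hand side $\sum_{i=1}^{k-1} N_i$ involves only the vanishing terms $N_1, \dots, N_{n-3}$ (an empty sum when $n \leq 3$) and so equals $0$; hence $1 > 0$. For the inductive step, assume $N_{k+1} > \sum_{i=1}^{k-1} N_i$ for some $k \geq n-2$. Then
\[
\sum_{i=1}^{k} N_i = \Bigl(\sum_{i=1}^{k-1} N_i\Bigr) + N_k < N_{k+1} + N_k \leq N_{k+2},
\]
where the strict inequality is the induction hypothesis and the final inequality is the bound just noted; this is exactly the assertion for $k+1$, so the induction closes.

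I do not expect a genuine obstacle: the argument is essentially a one-line induction once the recurrence is rewritten as $N_{k+2} \geq N_{k+1} + N_k$. The only points needing care are bookkeeping at the low end — verifying the recurrence is already in force from the base index on, and reading degenerate partial sums $\sum_{i=1}^{m} N_i$ with $m \leq 0$ as $0$ — and tracking the strictness: the recurrence alone gives only ``$\geq$'' in $N_{k+2} \geq N_{k+1} + N_k$, so the required strict inequality must come, as it does, from the induction hypothesis. It is worth noting in passing that the same scheme also produces the stronger family of inequalities (analogous to the two bullets in Lemma~\ref{thm:Tri}) that the $n$-nacci emptying strategy will ultimately need.
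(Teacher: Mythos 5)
Your induction via the crude bound $N_{k+2} \geq N_{k+1} + N_k$ is correct and is essentially the argument the paper intends: the paper's own ``proof'' of Lemma~\ref{thm:nac1} is just the one-line remark that it is the same as the Fibonacci case (Lemma~\ref{thm:Fib}), i.e.\ the inductive argument in the style of Lemma~\ref{thm:Tri} that you spell out. Your attention to the base index $k = n-2$, the degenerate sums, and where the strict inequality comes from is a careful filling-in of that sketch rather than a different route.
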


\begin{proof}
The proof is the same as in Lemma~\ref{thm:Fib} for Fibonacci numbers.
\end{proof}

This inequality is not surprising when we notice that the $a$-nacci sequence grows faster than the $b$-nacci sequence for $a>b$. Hence, other $n$-nacci sequences grow faster than the Fibonacci sequence. But the following inequality is more subtle.

\begin{lemma}\label{thm:nac2}
The $n$-nacci sequence satisfies the inequality:
$$N_{k+n-1} - \sum\limits_{i=k+1}^{k+n-2}N_i > \sum\limits_{i=1}^{k-1}N_i.$$
\end{lemma}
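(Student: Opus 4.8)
The plan is to compute the left-hand side exactly from the defining recurrence and then reduce to Lemma~\ref{thm:nac1}. Assume $n\ge 2$, so that $k+n-1\ge n$ and the $n$-nacci recurrence applies to the largest term: $N_{k+n-1}=N_{k-1}+N_k+N_{k+1}+\cdots+N_{k+n-2}$. Subtracting the middle block $\sum_{i=k+1}^{k+n-2}N_i$ from both sides, the whole alternating combination on the left of the claimed inequality collapses to just two terms,
$$N_{k+n-1}-\sum_{i=k+1}^{k+n-2}N_i = N_{k-1}+N_k .$$
So the lemma is equivalent to $N_{k-1}+N_k>\sum_{i=1}^{k-1}N_i$.

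Next I would cancel the common term $N_{k-1}$: since $\sum_{i=1}^{k-1}N_i=N_{k-1}+\sum_{i=1}^{k-2}N_i$, the inequality $N_{k-1}+N_k>\sum_{i=1}^{k-1}N_i$ is equivalent to $N_k>\sum_{i=1}^{k-2}N_i$. This is precisely the statement of Lemma~\ref{thm:nac1} with $k$ replaced by $k-1$, so it holds, and the proof is complete.

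The only genuinely subtle point — and presumably the reason this inequality is flagged as less obvious than Lemma~\ref{thm:nac1} — is recognizing that $N_{k+n-1}-\sum_{i=k+1}^{k+n-2}N_i$ telescopes against the recurrence down to the short sum $N_{k-1}+N_k$; after that the estimate is immediate. The one thing to watch is the small-$k$ boundary, where some $N_i$ appearing in these sums vanish; those cases should be checked directly, exactly as the base case is handled in the induction of Lemma~\ref{thm:Tri}. As an alternative to the reduction above, one can instead run the whole argument as an induction on $k$ in the style of that lemma: add $N_k$ to both sides of the hypothesis $N_{k-1}+N_k>\sum_{i=1}^{k-1}N_i$ and use the easy fact $N_{k+1}\ge N_{k-1}+N_k$ to obtain $N_k+N_{k+1}>\sum_{i=1}^{k}N_i$, which is the statement for $k+1$.
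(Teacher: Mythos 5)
Your argument is correct and is essentially identical to the paper's proof: both use the defining recurrence to collapse $N_{k+n-1}-\sum_{i=k+1}^{k+n-2}N_i$ to $N_{k-1}+N_k$ and then apply Lemma~\ref{thm:nac1} (with the index shifted by one) to get $N_k>\sum_{i=1}^{k-2}N_i$, hence $N_{k-1}+N_k>\sum_{i=1}^{k-1}N_i$. Your added remarks on small-$k$ boundary cases and the alternative induction are harmless but not needed.
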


\begin{proof}
By the definition $N_{k+n-1} - \sum\limits_{i=k+1}^{k+n-2}N_i = N_k + N_{k-1}.$ By Lemma~\ref{thm:nac1}, $N_k > \sum\limits_{i=1}^{k-2}N_i$. Hence, $N_k + N_{k-1} > \sum\limits_{i=1}^{k-1}N_i$.
\end{proof}

The next theorem shows that $n$-nacci numbers satisfy many inequalities between the two presented above.

\begin{theorem}\label{thm:nacAll}
The $n$-nacci sequence satisfies the inequality for any $0 \leq j \leq n-2$:
$$N_{k+j} - \sum\limits_{i=k+1}^{k+j-1}N_i > \sum\limits_{i=1}^{k-1}N_i.$$
\end{theorem}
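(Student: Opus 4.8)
The plan is to reduce the general statement to the two boundary cases already in hand, namely Lemma~\ref{thm:nac1} (the case $j=0$, after rewriting) and Lemma~\ref{thm:nac2} (the case $j=n-2$), by showing that the left-hand side $L_j := N_{k+j} - \sum_{i=k+1}^{k+j-1}N_i$ collapses, via the $n$-nacci recurrence, to a short sum of consecutive $n$-nacci terms with small indices. The key computation is to evaluate $L_j$ using $N_{k+j} = N_{k+j-1} + N_{k+j-2} + \cdots + N_{k+j-n}$. When $j \le n-2$, all of the indices $k+j-1, k+j-2, \ldots, k+1$ appearing in the subtracted sum are among the summands of the recurrence for $N_{k+j}$, so they cancel completely, leaving $L_j = N_k + N_{k-1} + \cdots + N_{k+j-n}$, a sum of $n-j+1$ consecutive terms ending at index $k$ (the smallest index being $k+j-n \le k-2$). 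So the claim becomes $N_k + N_{k-1} + \cdots + N_{k+j-n} > \sum_{i=1}^{k-1}N_i$, and since that left side already contains $N_k + N_{k-1}$ as its two top terms, it suffices to invoke $N_k + N_{k-1} > \sum_{i=1}^{k-1}N_i$, which is exactly what was established inside the proof of Lemma~\ref{thm:nac2} (itself a consequence of Lemma~\ref{thm:nac1}).

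Concretely, I would first state and prove the cancellation identity
\[
N_{k+j} - \sum_{i=k+1}^{k+j-1}N_i \;=\; \sum_{i=k+j-n}^{k}N_i \qquad (0 \le j \le n-2),
\]
by substituting the defining recurrence for $N_{k+j}$ and observing that the $n$ summands split into those with index $\ge k+1$ (which are killed by the subtracted sum) and those with index $\le k$ (which survive). A small bookkeeping check handles the edge value $j = 0$ and $j=1$, where the subtracted sum is empty; in those cases the identity reads $N_k = \sum_{i=k-n}^{k}N_i$ minus the overshoot, so one should phrase the index range carefully, but nothing substantive changes. Then, because $k+j-n \le k-2 < k-1 < k$, the right-hand sum contains at least the two terms $N_{k-1}$ and $N_k$, hence
\[
\sum_{i=k+j-n}^{k}N_i \;\ge\; N_{k-1} + N_k \;>\; \sum_{i=1}^{k-1}N_i,
\]
the last inequality being the one already extracted in the proof of Lemma~\ref{thm:nac2}. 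This chains to give the theorem.

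The only mild obstacle is index-range hygiene: one must make sure $k+j-n \le k-2$ (true since $j \le n-2$) so that $N_{k-1}$ and $N_k$ really appear, and one must make sure the subtracted sum $\sum_{i=k+1}^{k+j-1}$ is interpreted as empty when $j \le 1$ and that $k$ is large enough that all referenced terms are genuine sequence entries — i.e., one works in the regime $k \ge 1$ (or $k \ge 2$ to have $N_{k-1}$ with positive index), matching the hypotheses under which Lemmas~\ref{thm:nac1} and~\ref{thm:nac2} were proved. No new induction is needed; the theorem is essentially a one-line consequence of the recurrence plus Lemma~\ref{thm:nac2}, once the cancellation identity is written down.
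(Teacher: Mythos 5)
Your proposal is correct and follows essentially the same route as the paper: both rely on the cancellation identity $N_{k+j} - \sum_{i=k+1}^{k+j-1}N_i = \sum_{i=k+j-n}^{k}N_i$ obtained from the $n$-nacci recurrence, and then bound this sum from below using the consequence $N_k + N_{k-1} > \sum_{i=1}^{k-1}N_i$ of Lemma~\ref{thm:nac1}. Whether one cites Lemma~\ref{thm:nac1} directly (as the paper does) or the step extracted in the proof of Lemma~\ref{thm:nac2} (as you do) is only a cosmetic difference.
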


\begin{proof}
By the definition, $N_{k+j} - \sum\limits_{i=k+1}^{k+j-1}N_i = \sum\limits_{i=k+j-n}^{k}N_i$. By the inequality in Lemma~\ref{thm:nac1},

$$ \sum\limits_{i=k+j-n}^{k}N_i = N_k + \sum\limits_{i=k+j-n}^{k-1}N_i >\sum\limits_{i=1}^{k-2}N_i +  N_{k-1} = \sum\limits_{i=1}^{k-1}N_i .$$

\end{proof}

Now we are equipped to come back to the Cookie Monster number.

\begin{theorem}
When $k$ jars contain a set of $n$-nacci numbers $S = \{N_{n}, \dots, N_{n+k-1}\}$, the Cookie Monster number is: $$\CM(S) = \floor[\Big] {\frac{(n-1)k}{n}} + 1.$$
\end{theorem}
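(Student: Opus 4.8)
The plan is to establish matching lower and upper bounds for $\CM(S)$, mirroring the structure of the Fibonacci and Tribonacci proofs but now feeding on Theorem~\ref{thm:nacAll} for the full family of inequalities. For the \textbf{upper bound}, I would simply analyze the \emph{cookie-monster-knows-addition} strategy already described: group the jars into blocks of $n$ consecutive $n$-nacci numbers starting from the top. Within a block $\{N_{m}, N_{m+1}, \dots, N_{m+n-1}\}$, the $i$-th move ($1 \le i \le n-1$) takes $N_{m+n-1-i}$ cookies from the $(m+n-1-i)$-th jar together with the current largest jar; after $n-1$ such moves the largest jar has been reduced by $N_{m+n-2} + N_{m+n-3} + \cdots + N_m = N_{m+n-1} - N_{m-1}$, i.e.\ down to $N_{m-1}$, which coincides with a jar in the next block, so it is discarded, and each of the other $n-1$ jars in the block was emptied. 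Thus every full block of $n$ jars costs $n-1$ moves; after $\lfloor k/n \rfloor$ blocks there remain $k - n\lfloor k/n\rfloor = k \bmod n$ jars (in a ``fresh'' state, since the carried-over jar became a duplicate and is gone), which are emptied one per move. Counting: $(n-1)\lfloor k/n\rfloor + (k \bmod n)$ when $k \not\equiv 0$, and $(n-1)\lfloor k/n\rfloor$ when $k\equiv 0 \pmod n$ but then the last block needs all $n$ of its jars emptied separately except we already spent $n-1$ — careful bookkeeping here, exactly as in the Tribonacci proof, collapses all cases to $\lfloor (n-1)k/n\rfloor + 1$.

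For the \textbf{lower bound}, the argument is an induction on the number of blocks, and this is where Theorem~\ref{thm:nacAll} does the real work. Consider the $n$ largest jars $N_{k+n-1-n}, \dots, N_{n+k-1}$ (relabeling, the top block). By Lemma~\ref{thm:nac1}, each of the top jar and the second-from-top jar individually exceeds the sum of all $k-n$ smaller jars, so any emptying strategy must contain at least one move that touches the second-largest jar but none of the smallest $k-n$ jars. More is true: I claim the Cookie Monster must spend at least $n-1$ moves whose restriction to the smallest $k-n$ jars is empty, in order to discard the top $n$ jars. The point is that if only $t \le n-2$ such ``block-internal'' moves were used, then the largest jar $N_{n+k-1}$ would have to be written as a sum of at most $n-2$ move amounts drawn from those moves (it cannot use any move that also touches a small jar, since $N_{n+k-1}$ exceeds the total of the small jars plus... — more precisely one shows that a move touching $N_{n+k-1}$ and also some small jar forces that move amount to be so large it cannot be combined to reach any valid small jar value). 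By Theorem~\ref{thm:nacAll} with $j = t+1 \le n-1$, writing $N_{n+k-1}$ as a sum of $t$ of the other block jars' values (which are themselves at most $N_{n+k-2}, \dots, N_{n+k-1-t}$ after possible reductions) is impossible because $N_{n+k-1} - \sum_{i} N_{i} > 0$ still, i.e.\ you cannot cover the top jar's cookie count with only $t$ pieces from within the block. Hence at least $n-1$ block-internal moves are forced, the top $n$ jars get discarded, and by induction the remaining $\le k-n$ jars need at least $\lfloor (n-1)(k-n)/n\rfloor + 1$ further moves; adding gives the bound.

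I expect the \textbf{main obstacle} to be making the lower-bound induction fully rigorous — specifically, arguing cleanly that a move touching the largest jar and also touching one of the small jars ``doesn't help,'' so that without loss of generality the moves discarding the top block are internal to it, and then that $n-1$ internal moves are genuinely necessary (not just sufficient). The Fibonacci and Tribonacci proofs wave at this with ``the Cookie Monster cannot be less optimal if he starts with this move''; for general $n$ one needs the uniform family of strict inequalities in Theorem~\ref{thm:nacAll} precisely to rule out doing the top block in $n-2$ or fewer moves. I would phrase it as: after deleting from every move its intersection with the bottom $k-n$ jars, the resulting moves still empty the top $n$ jars (because those deleted portions contributed nothing to the top jars — any move amount used by a top jar cannot simultaneously be used to adjust a small jar to a legal value, by the inequalities), and a set of $\le n-2$ amounts cannot represent $N_{n+k-1}$ together with representing all $n-1$ other block values as distinct subset-sums, which is exactly the content of the $j \le n-2$ cases of Theorem~\ref{thm:nacAll}. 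Once that lemma-level claim is isolated and proved, the induction and the arithmetic of the three residue cases of $k \bmod n$ are routine, just as before.
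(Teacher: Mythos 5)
Your overall plan --- the \emph{cookie-monster-knows-addition} strategy for the upper bound, and for the lower bound the claim that at least $n-1$ moves must avoid the smallest $k-n$ jars, followed by induction on blocks --- is the same as the paper's, but your justification of that key lower-bound claim has a genuine flaw. You assert that the largest jar ``cannot use any move that also touches a small jar,'' that such a move's amount would be forced to be too large, and later that the bottom-jar portions of mixed moves ``contributed nothing to the top jars.'' All of these are false: a move may remove, say, $1$ cookie from the largest jar and from the smallest jar simultaneously; its amount is bounded \emph{above} by the small jar's current content, so it is small rather than large, and such mixed moves certainly can contribute to emptying the top jars. Consequently your reduction to a subset-sum statement (the top jar must be a sum of at most $t$ block-internal amounts, those amounts being the other block jars' values) does not apply to an arbitrary strategy: move amounts need not equal jar values, and a single internal move of amount $N_{n+k-1}$ empties the top jar by itself, so no impossibility statement about representing $N_{n+k-1}$ alone can yield the bound.

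What actually closes the gap --- and what the paper does --- is a counting argument with slack rather than an exclusion. Since every jar ends empty, the total amount of all moves touching at least one of the bottom $k-n$ jars is at most their total content, which is at most $\sum_{i=1}^{k-1} N_i$, and the total amount of all moves touching any of the top-block jars with $N_{k+1},\dots,N_{k+j-1}$ cookies is at most $\sum_{i=k+1}^{k+j-1} N_i$. If every move touching the jar with $N_{k+j}$ cookies fell into one of these two classes, we would get $N_{k+j} \le \sum_{i=1}^{k-1} N_i + \sum_{i=k+1}^{k+j-1} N_i$, contradicting Theorem~\ref{thm:nacAll} --- whose right-hand side $\sum_{i=1}^{k-1} N_i$ is there precisely to absorb the contribution of moves touching the bottom jars that you tried to rule out --- and, for the case $j=n-1$ that your range ``$j=t+1\le n-1$'' needs, contradicting Lemma~\ref{thm:nac2}, since Theorem~\ref{thm:nacAll} only covers $j\le n-2$. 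This produces, for each $j=1,\dots,n-1$, a move touching the $N_{k+j}$-jar but neither the bottom jars nor the smaller top-block jars, and these $n-1$ moves are pairwise distinct. With that step repaired, your induction (moves touching the bottom jars restrict to a valid strategy for them, plus the base case that at most $n$ remaining jars are powers of $2$ and cost one move each) and your upper-bound bookkeeping go through essentially as in the paper.
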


\begin{proof}
Consider the largest $n$ jars. The largest $n-1$ jars each have more cookies than the remaining $k-n$ jars do in total. That means the Cookie Monster must perform a move that includes the largest $n-1$ jars and does not touch the smallest $k-n$ jars. Suppose he touches the $(n-1)$-th largest jar on his first move. After that, even if he took cookies from the largest $n-2$ jars during his previous move, the $(n-2)$-th largest jar will still have more cookies than all of the smallest $k-n$ jars combined (due to inequalities in Theorem~\ref{thm:nacAll}). That means there must be another move that touches the $(n-2)$-th largest jar and does not touch the smallest $k-n$ jars nor touches the $(n-1)$-th largest jar. Continuing this, there should be a move that touches the $(n-3)$-th largest jar and does not touch the smallest $k-n$ jars, nor it touches the $(n-1)$-th largest jar nor $(n-2)$-th largest jar, and so on.

Summing up for every jar among the $n-1$ largest jars, there is a move that touches it and possibly the jars larger than it as well as the $n$-th largest jar, but does not touch anything else. Hence, there must be at least $n-1$ moves that do not touch the smallest $k-n$ jars.

We know that we can empty the largest $n$ jars in $n-1$ moves if the Cookie Monster uses his \textit{cookie-monster-knows-addition} strategy.  Thus, because at least $n-1$ moves are needed to touch and discard the last $n$ jars, discarding all $n$ jars in $n-1$ moves is optimal.

We can continue doing this until we have no more than $n$ jars left. Because the smallest $n$ jars in set $S$ are powers of two, we must use the same number of moves as the number of jars left to empty the remaining jars. If $k$ has nonzero remainder $x$ modulo $n$, the Cookie Monster needs $x$ additional moves for the last jars. Hence, the total number of moves is $(n-1) \lfloor k/n \rfloor + x$. If $k$ has remainder 0 modulo $n$, he needs $n$ additional moves to empty the final $n$ jars for a total of $(n-1)\lfloor k/n \rfloor + 1$ moves. Therefore, for any $k$, we save one move for every group of $n$ jars besides the last $n$ jars. Hence, we save $\floor[\Big] {\frac{k-1}{n} }$ moves, and the Cookie Monster number of $S$ is:
$$\CM(S) = k - \floor[\Big] {\frac{k-1}{n} } =  \floor[\Big] {\frac{(n-1)k}{n} } + 1.$$ 
\end{proof}

\section{Super Naccis}\label{sec:supernacci}

The monster wonders if he can extend his knowledge of nacci sequences to non-nacci ones. He first considers Super-$n$-nacci sequences that grow at least as fast as $n$-nacci sequences. Define a Super-$n$-nacci sequence as $S = \{M_{1}, \dots, M_{k}\}$, where $M_{i} \geq M_{i-n} + M_{i-n+1} + \cdots + M_{i-1}$ for $i \geq n$. The Cookie Monster suspects that since he already knows how to consume the nacci sequences, he might be able to bound $\CM(S)$ for Super naccis.

\begin{theorem}
For Super-$n$-nacci sequences $S$ with $k$ terms, $$\floor[\Big] {\frac{(n-1)k}{n} } + 1 \leq \CM(S).$$
\end{theorem}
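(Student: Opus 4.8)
The plan is to transplant the lower-bound half of the $n$-nacci theorem, the guiding observation being that a Super-$n$-nacci sequence satisfies $M_i \ge M_{i-n}+\cdots+M_{i-1}$ for $i\ge n$ with ``$\ge$'' precisely where the genuine $n$-nacci recurrence has ``$=$'', and that the $n$-nacci lower bound used only the ``$\ge$'' direction of its recurrence. First I would establish the Super-$n$-nacci analogues of Lemmas~\ref{thm:nac1}, \ref{thm:nac2} and Theorem~\ref{thm:nacAll}: by the same inductions as there, but fed with the inequality $M_i\ge\sum_{\ell=i-n}^{i-1}M_\ell$ in place of an equality, one obtains, for all $m\ge n$ and all $0\le j\le n-1$,
$$M_{m+j}-\sum_{i=m+1}^{m+j-1}M_i \;>\; \sum_{i=1}^{m-1}M_i.$$
The inequality is already strict at the base $m=n$, since $M_n\ge M_1+\cdots+M_{n-1}$ carries a strictly positive extra term, and strictness propagates up the induction. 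Reindexing by $m=k-n+1$, $j=n-\ell$, this reads: for $1\le\ell\le n-1$ the $\ell$-th largest jar together with the $n$-th largest jar still outweighs every jar strictly smaller than the $\ell$-th largest; equivalently $\big(\sum_{i=1}^{k-\ell}M_i\big)-M_{k-n+1}<M_{k-\ell+1}$.

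Next I would fix an optimal family of $\CM(S)$ move amounts together with, for each jar, a subset of them summing to it, and say a move \emph{touches} a jar when it lies in that jar's subset. The averaging identity $\sum_{i\in F}M_i=\sum_a a\cdot\#\{i\in F:\,a\text{ touches }i\}\ge\sum_{a\text{ touches some }i\in F}a$ shows that the total of all move amounts touching at least one jar of a set $F$ is at most $\sum_{i\in F}M_i$. I would apply this with $F=F_\ell$ the set of all jars strictly smaller than the $\ell$-th largest, \emph{except} the $n$-th largest jar. By the previous step $\sum_{i\in F_\ell}M_i<M_{k-\ell+1}$, so the moves summing to the $\ell$-th largest jar cannot all touch jars of $F_\ell$, and there is a move $m_\ell$ that touches the $\ell$-th largest jar but no jar of $F_\ell$; in particular it touches none of the $k-n$ smallest jars. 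For $\ell<\ell'$ the move $m_{\ell'}$ touches the $\ell'$-th largest jar, which lies in $F_\ell$, so $m_\ell\ne m_{\ell'}$; thus $m_1,\dots,m_{n-1}$ are $n-1$ distinct moves, none touching any of the $k-n$ smallest jars. Deleting them leaves a family of moves still sufficient to empty the Super-$n$-nacci subsequence formed by the $k-n$ smallest jars, whence $\CM(S)\ge(n-1)+\CM(S')$ with $|S'|=k-n$.

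Finally, iterating this reduces to a Super-$n$-nacci sequence with at most $n$ jars. Since an initial segment of a Super-$n$-nacci sequence is super-increasing (each term exceeds the sum of all previous ones — this is exactly the ``grows at least as fast as $n$-nacci'' hypothesis on the opening terms), Lemma~\ref{thm:sums} shows that any $m\le n$ of them need $m$ moves. Writing $k=qn+r$ with $0\le r<n$ and counting exactly as in the $n$-nacci theorem — one move saved per complete block of $n$ except the last — gives $\CM(S)\ge k-\lfloor(k-1)/n\rfloor=\lfloor(n-1)k/n\rfloor+1$.

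The hard part is the middle step: converting ``each of the largest $n-1$ jars outweighs all the small jars'' into ``there are $n-1$ \emph{genuinely distinct} moves that avoid all the small jars.'' The averaging bound by itself only yields one such move; getting $n-1$ of them forces the right choice of the sets $F_\ell$ — crucially \emph{excluding} the $n$-th largest jar, so that the one move shared into it is allowed — and then needs the entire ladder of inequalities from the first step, not merely the crudest one. A secondary nuisance is keeping track of strictness in those inequalities, and of how short an initial segment one may legitimately feed to Lemma~\ref{thm:sums}, since the Super-$n$-nacci hypothesis is weaker on the opening terms than the $n$-nacci one.
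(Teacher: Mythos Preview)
Your approach is exactly the paper's: transplant the $n$-nacci lower-bound argument, noting that it rested on inequalities that persist when ``$=$'' is weakened to ``$\ge$''. Your write-up is far more explicit than the paper's one line ``The same proof works here,'' and your middle step --- extracting $n-1$ pairwise distinct moves avoiding the $k-n$ smallest jars via the averaging bound and the sets $F_\ell$ --- is a clean formalization of what the paper leaves implicit.

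However, the ``secondary nuisance'' you flag is not secondary: it is a genuine gap, and one that the paper's definition does not let you close. The Super-$n$-nacci condition $M_i \ge M_{i-n}+\cdots+M_{i-1}$ is imposed only for $i \ge n$, so the opening terms are constrained merely to be distinct positive integers; they need \emph{not} be super-increasing, and you cannot feed them to Lemma~\ref{thm:sums}. Concretely, take $n=3$, $k=3$, $S=\{1,2,3\}$: with $M_0=0$ one has $M_3 = 3 \ge M_0+M_1+M_2 = 3$, so $S$ is Super-$3$-nacci, yet $\CM(S)=2 < 3 = \lfloor 2\cdot 3/3\rfloor + 1$. Thus the theorem as stated is actually false, and no proof can succeed without an extra hypothesis on the initial terms (for instance that they coincide with the $n$-nacci initial values $1,2,\ldots,2^{n-1}$, or that the recurrence holds strictly from $i=2$ with $M_j=0$ for $j\le 0$). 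The paper's one-liner overlooks this: the $n$-nacci lower-bound argument did \emph{not} use only inequalities --- it also used that the smallest $n$ jars are powers of $2$, a fact with no Super-$n$-nacci analogue under the stated definition. Your instinct that something is off there is correct; it is the statement, not your argument, that needs repair.
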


\begin{proof}
The proof of the bound for $n$-nacci sequences used only the inequalities. The same proof works here.
\end{proof}

\section{Beyond Naccis}\label{sec:beyondnacci}

We found sequences representing $k$ jars such that their Cookie Monster numbers are asymptotically $rk$, where $r$ is a rational number of the form $(n-1)/n$. Is it possible to invent other sequences whose Cookie Monster numbers are asymptotically $rk$, where $r$ is any rational number not exceeding 1?

Before discussing sequences and their asymptotic behavior, we go back to the bounds on the Cookie Monster number of a set and check if any value between the bounds is achieved. 

But first, a definition. A set $S = \{s_1, s_2, \ldots, s_k\}$ of increasing numbers $s_i$ is called \textit{two-powerful} if it contains all the powers of 2 not exceeding $\max (S) = s_k$. We can calculate the Cookie Monster number of a two-powerful set:

\begin{lemma}\label{thm:two-powerful}
Given a two-powerful set $S=\{s_1, s_2, \ldots, s_k\}$, its Cookie Monster number is the smallest $m$ such that $2^m$ is larger than all elements in $S$: $\CM(S) = \lfloor \log_2 s_k \rfloor +1$.
\end{lemma}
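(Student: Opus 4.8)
The plan is to establish the equality by proving the two inequalities separately. The upper bound is essentially immediate: since $S$ is two-powerful, it contains every power of $2$ up to $s_k$, and in particular $s_k < 2^{m}$ where $m = \lfloor \log_2 s_k \rfloor + 1$. By Lemma~\ref{thm:Cavers1} we then have $\CM(S) \leq \lfloor \log_2 s_k \rfloor + 1 = m$. (Alternatively one observes directly that the binary algorithm with move set $\{2^{m-1}, \ldots, 2, 1\}$ works.) So the entire content of the lemma is the matching lower bound $\CM(S) \geq m$.

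For the lower bound, the first idea is to invoke Lemma~\ref{thm:sums}: I would try to show that the set $\{1, 2, 4, \ldots, 2^{m-1}\}$ of powers of $2$ sitting inside $S$ already forces $m$ moves, because for powers of $2$ we have $2^{j} > 2^{j-1} + \cdots + 2 + 1$, so that subset by itself has Cookie Monster number $m$. The subtlety is that $S$ has \emph{more} jars than just these powers of $2$, and adding jars cannot decrease the Cookie Monster number only if we argue it carefully — in fact the clean statement is that $\CM$ is monotone under taking subsets of jars, since any sequence of moves emptying $S$ restricts to a sequence of moves emptying any subset $S' \subseteq S$ (just ignore the jars not in $S'$ on each move). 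Using Lemma~\ref{thm:commutativity} and this monotonicity, $\CM(S) \geq \CM(\{1,2,\ldots,2^{m-1}\}) = m$, and we are done.

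If one prefers a self-contained argument not routed through Lemma~\ref{thm:sums}, the alternative is a direct induction mirroring the proof of Lemma~\ref{thm:sums}: the largest power of $2$ in $S$, namely $2^{m-1}$, exceeds the sum of all strictly smaller powers of $2$ in $S$; hence any strategy must contain a move touching the jar $2^{m-1}$ (or a larger jar, but here $2^{m-1}$ could be $s_k$ or not) in a way that does not touch the smaller powers of $2$, and by commutativity we may assume the monster spends its first move reducing $2^{m-1}$ to $0$; the remaining jars still contain all powers of $2$ up to $2^{m-2}$, so the set obtained is again two-powerful with exponent one smaller, and induction gives $m$ total moves.

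The main obstacle — really the only thing requiring care — is making the monotonicity-under-subsets step rigorous, i.e.\ justifying that $\CM(S') \leq \CM(S)$ whenever $S' \subseteq S$, and then correctly identifying that the powers of $2$ inside a two-powerful set form a subset whose Cookie Monster number is exactly $m = \lfloor \log_2 s_k \rfloor + 1$. Everything else (the upper bound, the arithmetic $m = \lfloor \log_2 s_k \rfloor + 1$, and the geometric-series inequality for powers of $2$) is routine and already packaged in the earlier lemmas.
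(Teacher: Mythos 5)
Your proposal is correct and follows essentially the same route as the paper: the upper bound via Lemma~\ref{thm:Cavers1} (the binary algorithm), and the lower bound by noting that the powers of $2$ inside $S$ form a subset with Cookie Monster number $m$ (by Lemma~\ref{thm:sums}) and that $\CM$ cannot decrease when passing to a superset of jars. In fact you spell out the monotonicity step (restricting a move sequence for $S$ to a subset of jars) more explicitly than the paper, which simply asserts it.
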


\begin{proof}
Let $m$ be the smallest power of 2 not in $S$: $m = \lfloor \log_2 s_k \rfloor+1$. Then $S$ contains a subset of powers of 2, namely $S' = \{2^0, 2^1, \ldots, 2^{m-1}\}$. This subset has a Cookie Monster number $m$. A superset of $S'$ cannot have a smaller Cookie Monster number, so $\CM(S) \geq m$. On the other hand, by Lemma~\ref{thm:Cavers1} $\CM(S) \leq m$. Hence, $\CM(S) = m$.
\end{proof}

Two-powerful sets are important because they are easy to construct, and we know their Cookie Monster number. They become crucial building blocks in the following theorem.

\begin{theorem}\label{thm:exist}
For any $k$ and $m$ such that $m \leq k < 2^m$, there exist a set $S$ of jars of length $k$ such that $\CM(S) = m$. 
\end{theorem}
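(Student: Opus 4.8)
The plan is to build $S$ from a two-powerful set and then pad it. The target is a set of exactly $k$ jars whose Cookie Monster number is exactly $m$, given $m \le k < 2^m$. Lemma~\ref{thm:two-powerful} already hands us sets with a prescribed Cookie Monster number: the set $\{2^0, 2^1, \ldots, 2^{m-1}\}$ has $m$ jars and Cookie Monster number $m$. So the case $k = m$ is immediate. For larger $k$ we need to add $k - m$ more jars without raising the Cookie Monster number above $m$ and without lowering it below $m$.

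The key observation is that the lower bound is easy to preserve: any set containing $\{2^0, \ldots, 2^{m-1}\}$ as a subset has Cookie Monster number at least $m$, since a superset of a set with Cookie Monster number $m$ cannot do better (restricting an emptying strategy to a subset still empties the subset). So I would start with $S' = \{2^0, \ldots, 2^{m-1}\}$ and adjoin new elements all of which are less than $2^m$; then $S$ is still two-powerful with $\max(S) < 2^m$, so by Lemma~\ref{thm:two-powerful} (or directly by Lemma~\ref{thm:Cavers1}) $\CM(S) \le m$, and combined with the lower bound $\CM(S) = m$. The only thing to check is that there is enough room: I need $k - m$ distinct integers in $\{1, 2, \ldots, 2^m - 1\}$ that are not already among the $m$ powers of two $2^0, \ldots, 2^{m-1}$. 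The number of available integers is $(2^m - 1) - m$, so this is possible exactly when $k - m \le 2^m - 1 - m$, i.e. $k \le 2^m - 1 < 2^m$, which is precisely the hypothesis $k < 2^m$. (When $k = m$ we add nothing.)

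So the construction is: let $S$ consist of the $m$ powers of two $2^0, 2^1, \ldots, 2^{m-1}$ together with any $k - m$ of the remaining integers in $\{1, \ldots, 2^m - 1\}$ — for concreteness, take $\{1, 2, \ldots, 2^m - 1\}$ and delete all but $k$ of its elements, making sure to keep all $m$ powers of two, which is possible since $k \ge m$. Then $|S| = k$, $S$ is two-powerful, and $\max(S) \le 2^m - 1 < 2^m$, so Lemma~\ref{thm:two-powerful} gives $\CM(S) = \lfloor \log_2 \max(S)\rfloor + 1 = m$.

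I do not expect a real obstacle here; the main point is just the counting check that the interval $\{1, \ldots, 2^m-1\}$ contains enough non-power-of-two integers, which matches the stated range $m \le k < 2^m$ exactly. One small subtlety worth a sentence: one must confirm that deleting elements to hit size $k$ while retaining all powers of two is feasible — it is, because there are $m$ powers of two and $m \le k$, so we never need to delete a power of two. The other subtlety is the lower bound argument: spell out that $\CM$ is monotone under taking supersets, which follows because any sequence of moves emptying $S$ also empties any subset of $S$ using the same (or fewer) moves.
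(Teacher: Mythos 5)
Your construction is exactly the one in the paper: take all powers of two $2^0,\ldots,2^{m-1}$ plus any $k-m$ further integers below $2^m$, then apply Lemma~\ref{thm:two-powerful}. Your version just spells out the counting check and the superset monotonicity that the paper leaves implicit, so it is correct and essentially the same proof.
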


\begin{proof}
 The given constraint allows us to build a two-powerful set $S$ of length $k$ such that $2^{m-1} \leq s_k < 2^m$. We include in this set all powers of two from 1 to $2^{m-1}$ and any other $k-m$ numbers not exceeding $2^m$. This two-powerful set satisfies the condition.
\end{proof}

Now we return to sequences. Suppose $s_1, s_2, \ldots$ is an infinite increasing sequence. Let us denote the set of first $k$ elements of this sequence as $S_k$. We are interested in the ratio of $\CM(S_k)/k$ and its asymptotic behaviour.

If $s_i = 2^{i-1}$, then $\CM(S_k)/k = 1$. If $s_i = i$, then $\CM(S_k)/k = (\lfloor \log_2 k \rfloor +1)/k$, which tends to zero when $i$ tends to infinity. We know that for Fibonacci numbers the ratio is $1/2$, for Tribonacci it is $2/3$, and for $n$-naccis it is $(n-1)/n$. What about other ratios? Are they possible?

Yes, we claim that any ratio $r: 0 \leq r \leq 1$ is possible. We will prove this by constructing  sequences with any given $r$. The idea is to take a sequence that contains all the powers of 2 and to add some numbers to the sequence as needed. Let us first construct the sequence explicitly.

\subsection{The sequence}\label{sec:sequence}

We build the sequence by induction. We start with $s_1=1$. Then $\text{\text{CM}}(S_1)/1 = 1 \geq r$. We process natural numbers one by one and decide whether to add a number to the sequence by the following rules:

\begin{itemize}
\item If it is a power of 2 we always add it.
\item If it is not a power of 2 we add it if the resulting ratio does not go below $r$.
\end{itemize}

Now we would like to study the sequence and prove some lemmas regarding it. Let us denote the elements of this sequence by $s_i$, its partial sums by $S_k = \{s_1, s_2, \ldots, s_k\}$, and the ratio $CM(S_k)/k$, by $r_k$. By the construction, $r_k \geq r$. We need to prove that $\lim_{k \to \infty} r_k = r$.

Suppose $CM(S_k) = m$ so that the current ratio $r_k$ is $m/k$. If $s_{k+1}$ is a power of two, then $r_{k+1}=(m+1)/(k+1)$ and the difference $r_{k+1}-r_k = (k-m)/k(k+1)$. As $0 \leq k-m < k$, we get  $0 \leq r_{k+1}-r_k \leq 1/(k+1)$. In this case the ratio does not decrease, but the increases are guaranteed to be smaller and smaller as $k$ grows. If $s_{k+1}$ is not a power of two, then $r_{k+1}=m/(k+1)$ and the difference  $r_{k+1}-r_k = -m/k(k+1)$. In this case the ratio always decreases.

\begin{lemma}
If $r=1$, then the sequence contains only powers of 2. If $r=0$, then the sequence contains all the natural numbers.
\end{lemma}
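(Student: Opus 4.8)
The plan is to analyze the two extreme cases separately using the explicit construction rules and the difference formulas for $r_{k+1} - r_k$ computed just before the lemma statement.

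First, suppose $r = 1$. I would argue by contradiction: suppose some non-power-of-2 natural number $s$ is added to the sequence, and let $S_k$ be the set at the moment just after $s$ is added, so $s = s_k$. By construction, a non-power-of-2 is added only if the resulting ratio stays $\geq r = 1$, i.e. $\CM(S_k)/k \geq 1$, which forces $\CM(S_k) = k$ (since $\CM(S_k) \leq k$ always, by Theorem~\ref{thm:Kh1}). But $S_k$ contains all powers of $2$ up to $s_k$ (powers of two are always added), so $S_k$ is two-powerful, and by Lemma~\ref{thm:two-powerful}, $\CM(S_k) = \lfloor \log_2 s_k\rfloor + 1$. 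Now $S_k$ contains $s_k$ together with all powers of $2$ below it and at least one power of $2$ below $s_k$ that is strictly less than $s_k$ but also $s_k$ itself is not a power of $2$; more carefully, $S_k$ has $\lfloor \log_2 s_k \rfloor + 1$ powers of $2$ in the range $[1, s_k]$ (namely $1, 2, \dots, 2^{\lfloor \log_2 s_k\rfloor}$), plus $s_k$ which is not among them, so $k \geq \lfloor \log_2 s_k\rfloor + 2 > \CM(S_k)$. This contradicts $\CM(S_k) = k$. Hence no non-power-of-2 is ever added, so the sequence consists exactly of the powers of $2$.

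Second, suppose $r = 0$. Here I want to show every natural number gets added. Again suppose not, and let $n$ be the first natural number (necessarily not a power of $2$) that is rejected. At the moment $n$ is considered, the current set is $S_k$ with $\CM(S_k) = m$ and $r_k = m/k \geq r = 0$, which is automatic. The rejection rule says $n$ is rejected only if adding it would push the ratio below $r = 0$; but adding $n$ gives ratio $m/(k+1) \geq 0$, which is never below $0$. So $n$ can never be rejected, and every natural number is added. I should double check the boundary reading of the construction: "we add it if the resulting ratio does not go below $r$" — with $r = 0$ the resulting ratio is always $\geq 0$, so the condition is always satisfied, confirming that all naturals are included.

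The main subtlety — and the step I would be most careful about — is the $r = 1$ direction, specifically the inequality $k \geq \lfloor \log_2 s_k \rfloor + 2$, i.e. making precise that adding a non-power-of-2 strictly increases the count $k$ beyond what $\CM$ can be for a two-powerful set. The argument is clean because in a two-powerful set the Cookie Monster number is pinned down exactly by Lemma~\ref{thm:two-powerful}, so any extra (non-power) element is pure "overhead" that drives the ratio strictly below $1$; but one must confirm that at least one power of $2$ is present and strictly below $s_k$ (true since $s_1 = 1 = 2^0 < s_k$ once $k \geq 2$, and $k = 1$ with $s_1$ a non-power is impossible since $s_1 = 1$) so that $k$ genuinely exceeds $\CM(S_k)$. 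Everything else is a direct application of the already-established difference formulas and monotonicity observations.
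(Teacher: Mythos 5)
Your proof is correct, and the $r=0$ half is exactly the paper's argument: with $r=0$ the rejection condition can never be triggered because the prospective ratio $m/(k+1)$ is never negative. For the $r=1$ half, however, you take a genuinely different (equally valid) route. The paper argues locally: it has already computed that appending a non-power of $2$ changes the ratio by $-m/k(k+1)<0$, and since $r_k\leq 1$ always, any strict decrease pushes the ratio below $1$, so the rule forbids it. You instead argue structurally: at the moment a non-power of $2$ would be accepted, the set is two-powerful (all smaller powers of $2$ have already been added, since numbers are processed in increasing order), so Lemma~\ref{thm:two-powerful} pins down $\CM(S_k)=\lfloor\log_2 s_k\rfloor+1$, while the element count satisfies $k\geq\lfloor\log_2 s_k\rfloor+2$, contradicting the requirement $\CM(S_k)=k$ forced by the acceptance rule. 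Your version buys an exact, quantitative statement ($\CM(S_k)\leq k-1$, so the ratio drops to at most $(k-1)/k$) and makes the two-powerful machinery do the work, at the cost of invoking Lemma~\ref{thm:two-powerful} and the increasing-order feature of the construction; the paper's version is more elementary, needing only the sign of the difference $r_{k+1}-r_k$ and the trivial bound $r_k\leq 1$. Both are complete; your careful check that $k$ strictly exceeds $\lfloor\log_2 s_k\rfloor+1$ (because $s_k$ itself is not among the counted powers of $2$) is precisely the point that makes the contradiction go through.
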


\begin{proof}
We start with the ratio 1 for the first term of the sequence: $r_1 = 1$. Every non-power of 2 in the sequence decreases the ratio. So if $r=1$, we cannot include non-powers of 2. If $r=0$, the ratio $r_k$ is always positive, so we include every non-power of 2.
\end{proof}

The sequences in the previous lemma produce the ratios 0 and 1, so from now on we can assume that $0 < r < 1$. Let us see what happens if we include all numbers between two consecutive powers of 2 in the sequence. Because all powers of 2 are present in the sequence, let us denote the index of $2^m$ in the sequence by $k_m$: $s_{k_m} = 2^m$. Hence, $CM(S_k) = m$ if $k_{m-1} \leq k < k_m$. Also, $r_{k_m} = (m+1)/k_m$. 

\begin{lemma}
If we include all the non-powers of 2 in the sequence between $k_m$ and $k_{m+1}$, then the ratio of ratios is bounded: $r_{k_{m+1}}/r_{k_m} \leq (m+2)/2(m+1)$.
\end{lemma}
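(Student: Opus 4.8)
The plan is to unwind the two ratios using the formulas recorded in the discussion just before the lemma, namely $r_{k_m} = (m+1)/k_m$ and $r_{k_{m+1}} = (m+2)/k_{m+1}$. Dividing one by the other gives
$$\frac{r_{k_{m+1}}}{r_{k_m}} = \frac{m+2}{m+1}\cdot\frac{k_m}{k_{m+1}},$$
so the claimed bound $r_{k_{m+1}}/r_{k_m} \le (m+2)/2(m+1)$ is equivalent to the single inequality $k_{m+1} \ge 2k_m$. Thus the entire proof reduces to estimating the index gap $k_{m+1}-k_m$.

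Next I would count the terms that get inserted between the two consecutive powers of $2$. Since $s_{k_m}=2^m$ and $s_{k_{m+1}}=2^{m+1}$, and since by hypothesis every non-power of $2$ strictly between $2^m$ and $2^{m+1}$ is included in the sequence (while every power of $2$ is always included), the block $s_{k_m}, s_{k_m+1}, \ldots, s_{k_{m+1}}$ consists of exactly the consecutive integers $2^m, 2^m+1, \ldots, 2^{m+1}$. Counting these in two ways yields $k_{m+1}-k_m = 2^{m+1}-2^m = 2^m$, i.e. $k_{m+1} = k_m + 2^m$.

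Finally I would invoke the trivial bound $k_m \le 2^m$: the first $k_m$ terms $s_1 < s_2 < \cdots < s_{k_m} = 2^m$ are distinct positive integers not exceeding $2^m$, so there can be at most $2^m$ of them. Combining the two facts, $k_{m+1} = k_m + 2^m \ge k_m + k_m = 2k_m$, hence $k_m/k_{m+1} \le 1/2$, and substituting into the displayed identity gives $r_{k_{m+1}}/r_{k_m} \le (m+2)/2(m+1)$, as claimed.

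I do not expect a genuine obstacle here; the one spot that needs care is the bookkeeping in the middle step — correctly reading "between $k_m$ and $k_{m+1}$" as the value range $(2^m, 2^{m+1})$ so that the index gap is exactly $2^m$ — together with remembering to use the clean bound $k_m \le 2^m$ rather than something weaker. The remainder is a one-line algebraic manipulation.
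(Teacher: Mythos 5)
Your proposal is correct and follows essentially the same route as the paper: both compute $r_{k_{m+1}}/r_{k_m} = \frac{m+2}{m+1}\cdot\frac{k_m}{k_m+2^m}$ using $k_{m+1}=k_m+2^m$ and then conclude with the bound $k_m \leq 2^m$. Your extra justification of $k_m \leq 2^m$ (distinct positive integers up to $2^m$) is a small welcome addition, but the argument is the same.
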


\begin{proof}
Suppose by the algorithm we need to add all the numbers between $k_m$ and $k_{m+1}$ to the sequence. Therefore, $k_{m+1} = k_m + 2^m$. The ratios are then $r_{k_m}=(m+1)/k_m$ and $r_{k_{m+1}}=(m+2)/(k_m+2^m)$. So the ratio of ratios is $r_{k_{m+1}}/r_{k_m} =(m+2)/(m+1) \cdot k_m/(k_m+2^m)$. Using the fact that $k_m \leq 2^m$, we get $r_{k_{m+1}}/r_{k_m} \leq (m+2)/2(m+1)$. So as $m$ grows, the ratio is almost halved. Starting from $m=3,$ we can guarantee that this ratio is never more than $2/3$.
\end{proof}

\begin{cor}
If we include all of the non-powers of 2 for $m > 2$, then the ratio of ratios is bounded uniformly: $r_{k_{m+1}}/r_{k_m} < 2/3$.
\end{cor}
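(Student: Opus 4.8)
The plan is to deduce this immediately from the preceding lemma, which already furnishes the bound $r_{k_{m+1}}/r_{k_m} \le (m+2)/(2(m+1))$ whenever all non-powers of $2$ between $k_m$ and $k_{m+1}$ are added to the sequence. So the entire task reduces to the elementary claim that $(m+2)/(2(m+1)) < 2/3$ for every integer $m > 2$.

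First I would rewrite the right-hand side of the lemma's estimate as
$$\frac{m+2}{2(m+1)} = \frac12\left(1 + \frac{1}{m+1}\right),$$
which makes it transparent that this quantity is strictly decreasing in $m$ and decreases to $1/2$. Consequently, over the range $m \ge 3$ its largest value occurs at $m = 3$, where it equals $\tfrac12\cdot\tfrac54 = \tfrac58$.

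Then I would close the argument with the numerical comparison $5/8 < 2/3$ (equivalently $15 < 16$), concluding that $r_{k_{m+1}}/r_{k_m} \le 5/8 < 2/3$ uniformly for all $m > 2$, which is exactly the assertion.

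I expect no real obstacle: all the substance lives in the previous lemma, and this corollary only repackages its inequality via a one-line monotonicity observation plus a single comparison of fractions. The only bookkeeping worth care is lining up the hypothesis $m > 2$ with the evaluation point $m = 3$ — but the lemma's own remarks about the ratio being "almost halved" and holding "starting from $m = 3$" already anticipate precisely this.
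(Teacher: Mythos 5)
Your proposal is correct and matches the paper's approach: the corollary is intended as an immediate consequence of the preceding lemma's bound $(m+2)/2(m+1)$, whose proof already notes that from $m=3$ onward this quantity stays below $2/3$. Your monotonicity observation and the evaluation $5/8 < 2/3$ at $m=3$ make that step explicit, with no gap.
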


\subsection{The theorem}

Now we are ready to prove the theorem.

\begin{theorem}
For any real number $r: 0 \leq r \leq 1$, there exists a sequence $s_i$ with partial sums $S_k = \{s_1, s_2, \ldots, s_k\}$ that have Cookie Monster numbers such that $\text{\text{CM}}(S_k)/k$ tends to $r$ when $k$ tends to infinity.
\end{theorem}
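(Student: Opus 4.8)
The plan is to analyze the sequence $s_i$ constructed by the algorithm in Section~\ref{sec:sequence} and show directly that $\lim_{k\to\infty} r_k = r$ for $0 < r < 1$ (the endpoint cases $r=0$ and $r=1$ being already settled by the lemma above). By construction we always have $r_k \geq r$, so it suffices to prove $\limsup_{k\to\infty} r_k \leq r$, i.e.\ that $r_k$ cannot stay bounded away from $r$ above. First I would record the dichotomy already established: when $s_{k+1}$ is a power of $2$ the ratio increases by at most $1/(k+1)$, and when $s_{k+1}$ is a non-power-of-$2$ (which the algorithm includes precisely when doing so keeps the ratio $\geq r$) the ratio strictly decreases. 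The key structural fact is that between two consecutive powers of $2$, the algorithm greedily pads with non-powers until the ratio would drop below $r$; so within each block the ratio is driven down toward $r$, and the only thing that can push it back up is hitting the next power of $2$.

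The heart of the argument is to control how large $r_k$ can be just after a power of $2$ is added, and to show these "spikes" decay. Let $k_m$ be the index with $s_{k_m}=2^m$, so $r_{k_m} = (m+1)/k_m$ and $\mathrm{CM}(S_k)=m$ for $k_{m-1}\le k<k_m$. I would argue that for all large $m$ the algorithm is forced to include \emph{every} non-power of $2$ between $k_m$ and $k_{m+1}$: indeed once $r_k$ has been pushed close to $r<1$, adding one more non-power changes the ratio from $m/k$ to $m/(k+1)$, a decrease of only $m/k(k+1) = O(1/k)$, which for $k$ large enough cannot bring $m/k$ below $r$ unless $m/k$ was already within $O(1/k)$ of $r$. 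Consequently, from some point on, $k_{m+1}=k_m+2^m$ and we are in the situation of the two preceding lemmas: $r_{k_{m+1}}/r_{k_m} \le (m+2)/2(m+1) < 2/3$ for $m>2$. Hence the spike values $r_{k_m}$ form a sequence that is eventually contracted by a factor uniformly below $1$ at each step relative to the previous spike — but they are also bounded below by $r$. I would combine these to conclude $r_{k_m}\to r$: the spikes are squeezed between $r$ and a sequence obeying $r_{k_{m+1}} \le \tfrac23 r_{k_m}$ until $r_{k_m}$ gets close to $r$, after which the full greedy padding keeps it within $O(1/k_m)$ of $r$.

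Once the spike subsequence $r_{k_m}\to r$ is established, finishing is routine: for general $k$ with $k_{m-1}\le k<k_m$, the value $r_k$ lies between $r$ (lower bound from construction) and something no larger than $\max(r_{k_{m-1}}, r_{k_m}) + o(1)$, because inside a block the ratio only moves by $O(1/k)$ per step — it decreases on non-power steps and there are no power steps strictly inside the block — so $r_k \le r_{k_{m-1}} \to r$. Thus $r_k \to r$. For the truly general real $r$ (not just rational), no change is needed: the algorithm and all the estimates above used only $r_k \ge r$ and the inequality $k_m \le 2^m$, never rationality of $r$, so the same proof yields a sequence with $\mathrm{CM}(S_k)/k \to r$ for every real $r\in(0,1)$, and the two boundary cases come from the earlier lemma.

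I expect the main obstacle to be making rigorous the claim that "eventually the algorithm includes all non-powers of $2$ in each block," i.e.\ pinning down the threshold $m$ beyond which $r_{k}$ has been pushed so close to $r$ that a single $O(1/k)$ decrement never undershoots $r$, and simultaneously ruling out the degenerate possibility that the block ends early with $r$ still noticeably above $r$ infinitely often. This requires carefully chaining the per-step estimates $|r_{k+1}-r_k| = O(1/k)$ across a whole block of length up to $2^m$ (so total drift up to $O(2^m/k_m) = O(1)$ in the worst case), and checking that the block is long enough to absorb any excess $r_{k_m} - r$; the two lemmas preceding the theorem do most of this work, and the remaining task is to verify that once $r_{k_m}$ has been contracted below, say, $1$, the block length $2^m$ comfortably exceeds $k_m(r_{k_m}-r)/r$ so that the ratio is driven all the way down to (just above) $r$ before the next power of $2$ arrives.
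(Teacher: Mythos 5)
There is a genuine gap, and it sits at the center of your argument: the claim that ``for all large $m$ the algorithm is forced to include every non-power of $2$ between $k_m$ and $k_{m+1}$, so from some point on $k_{m+1}=k_m+2^m$'' is false, and in fact the truth is the exact opposite. Your justification --- that a single step only decreases the ratio by $m/k(k+1)=O(1/k)$, which cannot push it below $r$ unless it is already within $O(1/k)$ of $r$ --- only shows that a \emph{skip} happens precisely when $r_k$ is within $1/(k+1)$ of $r$; it does not show skips are rare, because over a whole block of up to $2^m$ consecutive non-powers the cumulative decrease is not $O(1/k)$ but a multiplicative contraction by roughly $k_m/(k_m+2^m)\le 1/2$ (this is exactly the content of the lemma giving $r_{k_{m+1}}/r_{k_m}\le (m+2)/2(m+1)<2/3$). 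Consequently, once $r_{k_m}$ is anywhere near $r$ (say below $4r/3$), a full block would drive the ratio below $r$, which the construction forbids; so eventually \emph{every} block must contain a skipped non-power. Your proposed conclusion is also internally inconsistent as stated: spike values satisfying $r_{k_{m+1}}\le \tfrac23 r_{k_m}$ for all large $m$ while also satisfying $r_{k_m}\ge r>0$ is impossible, so the ``squeeze'' you describe cannot be run in that form.

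The paper uses the $2/3$-contraction lemma in the opposite, reductio direction, and that is the repair your argument needs. First, one shows skips occur infinitely often (the paper: if only finitely many numbers were dropped, the sequence would contain all but finitely many naturals and $\CM(S_k)/k\sim (\log_2 k)/k\to 0<r$, a contradiction; alternatively, your contraction estimate gives the same conclusion, since infinitely many full blocks would force the ratio below $r$). Each skip pins $r_k$ to within $1/(k+1)$ of $r$. Then, to show the ratio never wanders far afterward: fix $\epsilon<r/6$, wait until $r_t<r+\epsilon$ with $1/(t+1)<\epsilon$; inside a block the ratio only decreases, a power of $2$ raises it by at most $1/(k+1)<\epsilon$, and after that power of $2$ the block cannot be completed in full (else the $2/3$ factor would drop the ratio below $4r/3\cdot 2/3<r$), so another skip --- hence another $\epsilon$-close approach to $r$ --- must occur before the next power of $2$. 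Your final paragraph (block length $2^m$ exceeding $k_m(r_{k_m}-r)/r$) gropes toward this, but it contradicts your earlier ``all non-powers are included'' claim; the argument only closes once you replace ``eventually every block is full'' by ``eventually no block is full.'' Your observations that rationality of $r$ is never used and that $r=0,1$ are handled by the earlier lemma are correct and match the paper.
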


\begin{proof}
As we mentioned before, we can assume that $0< r < 1$. 

Given $r,$ we build the sequence described in Subsection~\ref{sec:sequence}. 
While building the sequence, if we need to skip the next number, we have approached $r$ within $m/k(k+1)$. That is, $$r \leq r_k \leq r+ \frac{m}{k(k+1)} \leq r+ \frac{1}{k+1}.$$

If our sequence contains all but a finite amount of natural numbers, the partial ratio will tend to zero. Because the ratio should never go below $r$, we get a contradiction. Hence, we must drop infinitely many numbers. Each time we drop a number, the partial ratio gets within $1/(k+1)$ of $r$. Therefore, with each next number dropped we get closer and closer to $r$. Now we must prove that not only we can get as close to $r$ as we want, but also that we do not wonder off too far from it in between. 

Take $\epsilon$ such that $\epsilon < r/6$, and consider $k$ such that $1/(1+k) < \epsilon$. We can find a number $t$ such that $t > k$ and $r_t < r + \epsilon$.  Thus, we have approached $r$ within the distance of $\epsilon$, and we continue building the sequence. If the next number is a non-power of 2, then the ratio approaches $r$ even closer. When we reach the next power of 2, then the ratio increases by no more than $\epsilon$. Therefore, the ratio stays within $2\epsilon$ of $r$, so it will not exceed $4r/3$. 

We claim that after this power of 2 we cannot add all non-powers of 2 until the next power of 2. Indeed, if that were the case, then the ratio would drop to a number below $4r/3 \cdot 2/3 < r$. Therefore, we will have to drop a non-power of 2 from the sequence after the first encountered power of 2. We will then approach the ratio again and get at least $\epsilon$-close to it. Thus, for numbers greater than $t$, the ratio will never be more than $2\epsilon$ away from $r$.
\end{proof}

\section{Bibliography and Acknowledgements}\label{sec:acknowledgements}

Cookie Monster is proud that people study his cookie eating strategy. The problem first appeared in \textit{The Inquisitive Problem Solver} by Vaderlind Guy and Larson \cite{VGL}. The research was continued by Cavers \cite{C}, Bernardi and Khovanova \cite{BK} and Belzner \cite{B}. 

Cookie Monster and the authors are grateful to MIT-PRIMES program for supporting this research.

\end{document}